\documentclass[a4paper, 11pt]{amsart}

\setlength{\topmargin}{2mm}
\setlength{\oddsidemargin}{10mm}
\setlength{\evensidemargin}{10mm}
\setlength{\textheight}{22.5cm}
\setlength{\textwidth}{14cm}

\usepackage[mathscr]{eucal}
\usepackage{amsmath}
\usepackage{amssymb}
\usepackage{amsfonts}
\usepackage{amsthm}

\usepackage{cases}

\usepackage[mathscr]{eucal}
\usepackage{amsmath}
\usepackage{amssymb}
\usepackage{amsfonts}
\usepackage{amsthm}
\usepackage[dvips]{graphics, color}
\theoremstyle{plain}
\newtheorem{theorem}{Theorem}[section]
%[theorem]
%[theorem]
\newtheorem{lemma}{Lemma}[section]%[theorem]
\newtheorem{remark}{Remark}[section]

%[section]
\renewcommand{\labelenumi}{(\theenumi)}
\numberwithin{equation}{section}

\def\<{\left<} \def\>{\right>}

\def\bea{\begin{eqnarray} }
\def\eea{\end{eqnarray} }
\def\be{\begin{equation} }
\def\ee{\end{equation} }
\def\qed{\ifhmode\unskip\nobreak\fi\ifmmode\ifinner\else\hskip5pt
\fi\fi\hbox{\hskip5 pt \vrule width4 pt height6 pt depth1.5 pt \hskip1pt }}

%--------------------------------------------------------------------

\begin{document}
\title[]{Ricci curvature of  real hypersurfaces in non-flat complex space forms}
\author[]{Toru Sasahara}
\address{Division of Mathematics, 
%Center for Liberal Arts and Sciences, 
Hachinohe Institute of Technology, 
Hachinohe, Aomori 031-8501, Japan}
\email{sasahara@hi-tech.ac.jp}

%\date{}

\begin{abstract} 
We establish an inequality among the Ricci curvature, the squared mean curvature, and the normal curvature for real hypersurfaces in complex space forms.
We classify real hypersurfaces in two-dimensional non-flat complex space forms which admit a
unit vector field satisfying identically the equality case of the inequality. 
\end{abstract}

\keywords{Real hypersurfaces, Ricci curvature, normal curvature.}

\subjclass[2010]{Primary 53C42; Secondary 53B25.}

\maketitle

 \section{Introduction}
 Let  $\tilde M^n(4c)$ denote an $n$-dimensional complex space form
 of constant holomorphic sectional curvature $4c$ $(\ne 0)$, that is, the complex projective $n$-space
 $\mathbb{C}P^n(4c)$ or the complex hyperbolic space $\mathbb{C}H^n(4c)$, according as
 $c>0$ or $c<0$.
 We denote by $J$  the almost complex structure on $\tilde M^n(4c)$.
 Let $M$ be a real hypersurface of  $\tilde M^n(4c)$.
 For a unit normal vector field $N$ of  $M$ in $\tilde M^n(4c)$,
the characteristic vector field on $M$ is defined by $\xi=-JN$.
If $\xi$ is a principal curvature vector at $p\in M$, then $M$ is said to be 
{\it Hopf} at $p$.  If $M$ is Hopf at every point, then $M$ is called a {\it Hopf hypersurface}.
 %A real hypersurface   in  $\mathbb{C}P^n(4)$ is said to be  {\it ruled}

Let $\mathcal{H}$ be the holomorphic distribution defined by $\mathcal{H}_p=\{X\in T_pM\ |\ \<X, \xi\>=0\}$ for $p\in M$.
If $\mathcal{H}$ is integrable and each leaf of its maximal integral manifolds is 
locally congruent to
a totally geodesic complex hypersurface
$\tilde M^{n-1}(4c)$ in $\tilde M^n(4c)$, then $M$ is called a {\it ruled real hypersurface}.

We denote by $\mathcal{D}_p$ the smallest subspace of $T_pM$ that contains $\xi$ and is invariant under the shape operator.
A hypersurface in  $\tilde M^n(4c)$ is said to be 2-{\it Hopf}
 if the distribution defined by $\mathcal{D}_p$ for $p\in M$
  is integrable and of constant rank $2$.

For a unit vector $X\in T_pM$,  we define
  the  {\it normal curvature } in the direction of $X$ by $\kappa_X=\<AX, X\>$.
  We establish in Section 2 an inequality among the Ricci curvature, the squared mean curvature, and 
  the normal curvature (Lemma \ref{lem1}).  By applying this inequality, 
  in case $n=2$, we have
 \be
Ric(\xi)\leq \frac{9}{8}||H||^2+\kappa_{\xi}^2+2c, \label{ricci-xi}
\ee 
  and for any unit vector  $U\in\mathcal{H}_p$,
   \be
Ric(U)\leq \frac{9}{8}||H||^2+\kappa_{U}^2+5c. \label{ricci-U}
\ee
 Here, $Ric (X)$ is the Ricci curvature in the direction $X$ and $H$ the mean curvature vector.
 
 The purpose of this paper is to investigate   real hypersurfaces
 which satisfy ($\ref{ricci-xi}$) or admit a unit vector field 
$U\in\mathcal{H}$
satisfying  (\ref{ricci-U}) identically.
We prove the following.
%We first  classify real hypersurfaces which satisfy ($\ref{ricci-xi}$) identically.
  \begin{theorem}\label{thm1}
Let $M$ be a real  hypersurface 
   in  $\tilde M^2(4c)$. Then
  the equality sign of $(\ref{ricci-xi})$ holds identically if and only if 
  $M$ is locally congruent to one of the following: 
  \begin{enumerate}
\renewcommand{\labelenumi}{(\roman{enumi})}
\item a geodesic sphere of  radius $\pi/(6\sqrt{c})$ in $\mathbb{C}P^2(4c)$, 
\item a tube of radius $\ln(2+\sqrt{3})/(4\sqrt{|c|})$ over a totally geodesic real hyperbolic space
  $\mathbb{R}H^2$ in $\mathbb{C}H^2(4c)$. 
\end{enumerate}
  
\end{theorem}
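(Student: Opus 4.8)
The plan is to first extract the pointwise equality conditions from (the proof of) Lemma \ref{lem1}. Specializing the Gauss equation of $M$ in $\tilde M^2(4c)$ to the direction $\xi$, for which $\phi\xi=0$, gives $Ric(\xi)=2c+(\mathrm{tr}\,A)\,\kappa_\xi-\|A\xi\|^2$. Writing $t=\mathrm{tr}\,A$ and using both $\|A\xi\|^2\ge\kappa_\xi^2$ and the elementary identity $\tfrac18 t^2-t\kappa_\xi+2\kappa_\xi^2=\tfrac18(t-4\kappa_\xi)^2\ge 0$ recovers $(\ref{ricci-xi})$ and shows that the equality sign holds at a point $p$ if and only if (a) $A\xi=\kappa_\xi\xi$ at $p$, i.e.\ $M$ is Hopf at $p$, and (b) $\mathrm{tr}\,A=4\kappa_\xi$ at $p$. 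Hence, assuming $(\ref{ricci-xi})$ is an equality identically, $M$ is a Hopf hypersurface and $\mathrm{tr}\,A=4\alpha$ holds everywhere, where $\alpha:=\kappa_\xi$ is the Hopf principal curvature.

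Next I would use the classical fact that on a connected Hopf hypersurface of a non-flat complex space form the function $\alpha$ is constant. Since $\dim M=3$, the restriction of $A$ to $\mathcal{H}$ has two (possibly equal) eigenvalues $\lambda,\mu$, realized by an orthonormal pair $\{U,\phi U\}$, and for a Hopf hypersurface these satisfy the well-known relation $\lambda\mu=\tfrac12\alpha(\lambda+\mu)+c$. Combining it with $\lambda+\mu=\mathrm{tr}\,A-\alpha=3\alpha$ shows that $\lambda,\mu$ are the roots of $z^2-3\alpha z+(\tfrac32\alpha^2+c)=0$; as $\alpha$ and $c$ are constant, $\lambda$ and $\mu$ are constant too. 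Therefore $M$ is a Hopf real hypersurface with constant principal curvatures $\alpha,\lambda,\mu$ obeying $\lambda+\mu=3\alpha$ and $\lambda\mu=\tfrac32\alpha^2+c$ (and in $\mathbb{C}P^2(4c)$ the discriminant condition $3\alpha^2\ge 4c$ must hold).

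Then I would invoke the classification of Hopf real hypersurfaces with constant principal curvatures in $\tilde M^2(4c)$: in $\mathbb{C}P^2(4c)$ these are, by Takagi's list together with Kimura's homogeneity theorem, the geodesic spheres (type $A_1$) and the tubes over a totally geodesic $\mathbb{R}P^2$ (type $B$); in $\mathbb{C}H^2(4c)$ these are, by Berndt's theorem, the horospheres, the geodesic spheres, the tubes over a totally geodesic $\mathbb{C}H^1$ (type $A$), and the tubes over a totally geodesic $\mathbb{R}H^2$ (type $B$). For each model I would substitute the known principal curvatures as functions of the tube radius $r$ into the condition $\lambda+\mu=3\alpha$ and solve. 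For the geodesic sphere of radius $r$ in $\mathbb{C}P^2(4c)$ this becomes $2\sqrt{c}\cot(\sqrt{c}\,r)=6\sqrt{c}\cot(2\sqrt{c}\,r)$, i.e.\ $\cot^2(\sqrt{c}\,r)=3$, giving $r=\pi/(6\sqrt{c})$; for the type $B$ hypersurface in $\mathbb{C}P^2(4c)$ the analogous equation reduces to $\tan^2(\cdot)=-\tfrac13$, which has no solution; for the horosphere, the geodesic sphere and the type $A$ tube in $\mathbb{C}H^2(4c)$ the equations likewise have no solution (they force the square of a hyperbolic function to be negative, or an outright numerical contradiction); and for the tube of radius $r$ over $\mathbb{R}H^2$ in $\mathbb{C}H^2(4c)$ the condition becomes $2\sqrt{|c|}\coth(2\sqrt{|c|}\,r)=6\sqrt{|c|}\tanh(2\sqrt{|c|}\,r)$, i.e.\ $\tanh^2(2\sqrt{|c|}\,r)=\tfrac13$, giving $2\sqrt{|c|}\,r=\mathrm{arctanh}(1/\sqrt3)=\tfrac12\ln(2+\sqrt3)$, hence $r=\ln(2+\sqrt3)/(4\sqrt{|c|})$. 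This leaves exactly the hypersurfaces (i) and (ii). Conversely, for these two specific hypersurfaces one checks directly from their principal curvatures that $\xi$ is principal and $\mathrm{tr}\,A=4\alpha$, so by the equality analysis of the first paragraph the equality sign of $(\ref{ricci-xi})$ holds identically; this gives the converse implication.

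The main obstacle is not any single hard estimate but rather the bookkeeping: correctly identifying the pointwise equality conditions coming from Lemma \ref{lem1}, and then running the case check against a \emph{complete} list of model hypersurfaces with their principal-curvature data correctly normalized relative to $4c$ — in particular verifying that no type $B$ hypersurface in $\mathbb{C}P^2(4c)$ and no type $A$ hypersurface in $\mathbb{C}H^2(4c)$ can meet the trace condition, and that the surviving radii are precisely those in the statement. One should also be careful with the degenerate case $\lambda=\mu$ (only two distinct principal curvatures), since that is exactly where the geodesic sphere in $\mathbb{C}P^2(4c)$ sits.
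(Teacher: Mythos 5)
Your proof is correct and follows essentially the same route as the paper: equality in $(\ref{ricci-xi})$ forces $M$ to be Hopf with $\mathrm{tr}\,A=4\kappa_\xi$, the standard Hopf structure results (constancy of $\alpha$ and the relation $2\lambda\mu=(\lambda+\mu)\alpha+2c$) then give constant principal curvatures, and the Takagi--Kimura and Berndt classifications reduce the problem to solving for the radius in each model, yielding exactly (i) and (ii). The only cosmetic difference is that you re-derive the pointwise equality conditions directly from the Gauss equation for $Ric(\xi)$ instead of quoting Lemma \ref{lem1} (which rests on Deng's algebraic lemma); the resulting conditions are identical.
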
 

%We next classify Hopf hypersurfaces which admit a unit vector field 
%satisfying  the equality in  (\ref{ricci-U}) identically.
\begin{theorem}\label{thm2}
Let $M$ be a Hopf hypersurface 
   in $\tilde M^2(4c)$.
Then $M$ admits a unit vector field $U\in\mathcal{H}$
 satisfying the equality  in $(\ref{ricci-U})$ identically if and only if $M$ is locally congruent to one of the following:
\begin{enumerate}
\renewcommand{\labelenumi}{(\roman{enumi})}
\item a tube  of radius $\pi/(6\sqrt{c})$ over a totally geodesic real projective space 
$\mathbb{R}P^2$ in $\mathbb{C}P^2(4c)$,
\item a horosphere in $\mathbb{C}H^2(4c)$.
\end{enumerate}
\end{theorem}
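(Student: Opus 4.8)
The plan is to turn the equality case of $(\ref{ricci-U})$ into an algebraic condition on the shape operator $A$, and then to use the rigidity of Hopf hypersurfaces — constancy of $\alpha$, the Hopf relation between the principal curvatures on $\mathcal H$, and the classification of Hopf hypersurfaces with constant principal curvatures — to read off the two examples.

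\emph{Step 1: the equality condition.} For a unit $U\in\mathcal H_p$ the Gauss equation of $M\subset\tilde M^2(4c)$ gives $Ric(U)=5c+(\mathrm{tr}\,A)\,\kappa_U-\|AU\|^{2}$, where the ambient contribution $5c=4c+c$ comes from the holomorphic plane $U\wedge JU$ and the totally real plane $U\wedge\xi$; this identity is exactly what underlies $(\ref{ricci-U})$. Writing $\tau=\mathrm{tr}\,A$ and splitting $\|AU\|^{2}=\kappa_U^{2}+\|AU-\kappa_U U\|^{2}$, a one-line computation gives
\[
\tfrac{9}{8}\|H\|^{2}+\kappa_U^{2}+5c-Ric(U)=2\Bigl(\kappa_U-\tfrac{\tau}{4}\Bigr)^{2}+\|AU-\kappa_U U\|^{2}.
\]
Hence the equality in $(\ref{ricci-U})$ holds at $p$ if and only if $U$ is a principal curvature vector with $AU=\tfrac14(\mathrm{tr}\,A)\,U$. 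The problem therefore reduces to classifying the Hopf hypersurfaces of $\tilde M^2(4c)$ that admit a unit $U\in\mathcal H$ with $AU=\tfrac14(\mathrm{tr}\,A)\,U$.

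\emph{Step 2: using the Hopf condition.} Write $A\xi=\alpha\xi$ and let $\lambda,\mu$ be the principal curvatures of $A$ on the $2$-plane $\mathcal H$; recall that on a connected Hopf hypersurface of a non-flat complex space form $\alpha$ is constant and $2\lambda\mu=\alpha(\lambda+\mu)+2c$. Since $U\in\mathcal H$ is principal, $\kappa_U\in\{\lambda,\mu\}$ and $\mathrm{tr}\,A=\alpha+\lambda+\mu$. If $\lambda=\mu$, the condition $\lambda=\tfrac14(\alpha+2\lambda)$ gives $\alpha=2\lambda$, and the Hopf relation then forces $\lambda^{2}=-c$; thus $c<0$ and $M$ is a Hopf hypersurface of $\mathbb{C}H^2(4c)$ with $A|_{\mathcal H}=\sqrt{-c}\,\mathrm{id}$ and $\alpha=2\sqrt{-c}$, hence locally a horosphere. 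If $\lambda\neq\mu$, say $\kappa_U=\lambda$, then $\lambda=\tfrac14(\alpha+\lambda+\mu)$ gives $\mu=3\lambda-\alpha$, and substituting into the Hopf relation yields $6\lambda^{2}-6\alpha\lambda+\alpha^{2}-2c=0$. As $\alpha$ and $c$ are constant, $\lambda$ takes only finitely many values and so is locally constant; then $\mu=3\lambda-\alpha$ is constant too, so $M$ has constant principal curvatures.

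\emph{Step 3: identifying the hypersurfaces.} By the classification of Hopf hypersurfaces with constant principal curvatures in $\tilde M^2(4c)$ (Takagi and Kimura for $\mathbb{C}P^2$, Berndt for $\mathbb{C}H^2$), a hypersurface with three distinct constant principal curvatures is locally a tube of some radius $r$ over a totally geodesic $\mathbb{R}P^2\subset\mathbb{C}P^2(4c)$ or over a totally geodesic $\mathbb{R}H^2\subset\mathbb{C}H^2(4c)$. Inserting the principal curvatures of these tubes into $6\lambda^{2}-6\alpha\lambda+\alpha^{2}-2c=0$ (equivalently $\mathrm{tr}\,A=4\kappa_U$) reduces to a single trigonometric, respectively hyperbolic, equation in $r$: the spherical case has the unique admissible solution $r=\pi/(6\sqrt c)$, and the hyperbolic case has no solution. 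Finally one checks the converse — that for the horosphere in $\mathbb{C}H^2(4c)$ (with any unit $U\in\mathcal H$) and for the tube of radius $\pi/(6\sqrt c)$ over $\mathbb{R}P^2$ in $\mathbb{C}P^2(4c)$ (with a suitable $\mathcal H$-principal $U$) one indeed has $AU=\tfrac14(\mathrm{tr}\,A)\,U$ — which is a direct substitution. The main obstacle is Step 2: passing from the pointwise condition to \emph{constant} principal curvatures, which hinges on combining the equality condition with the Hopf relation to produce the polynomial $6\lambda^{2}-6\alpha\lambda+\alpha^{2}-2c=0$ with constant coefficients; once this is in hand, Step 3 is careful but routine bookkeeping over the short classification list.
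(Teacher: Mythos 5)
Your proposal is correct and follows essentially the same route as the paper: the equality case forces $U$ to be principal with $4\kappa_U=\mathrm{Tr}(A)$ (your completion-of-squares identity is just a direct re-derivation of the paper's Lemma \ref{lem1} specialized to $n=2$, $U\in\mathcal H$, where the condition appears as $\mathrm{Tr}(B)=3\mu$), then the constancy of $\alpha$ and the Hopf relation $2\lambda\mu=\alpha(\lambda+\mu)+2c$ yield the same quadratic $6\kappa_U^2-6\alpha\kappa_U+\alpha^2-2c=0$ with constant coefficients, giving constant principal curvatures, and the Takagi--Kimura--Berndt lists are checked exactly as in the paper. The only differences are presentational.
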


In the non-Hopf case, under the assumption  that a vector field $U\in\mathcal{H}$ 
satisfying identically (\ref{ricci-U})
 is a geodesic vector field on $M$, that is, the integral curves of $U$ are geodesics on $M$, we have the following.
\begin{theorem}\label{thm3}
Let $M$ be a real hypersurface 
   in  $\tilde M^2(4c)$ which 
 is non-Hopf at every point.
Then $M$ admits a unit geodesic vector field $U\in\mathcal{H}$  satisfying the equality in 
$(\ref{ricci-U})$ identically if and only if $M$ is locally congruent to
%\begin{enumerate}
%\renewcommand{\labelenumi}{(\roman{enumi})}
%\item a minimal ruled hypersurface in $\mathbb{C}P^2$ or $\mathbb{C}H^2$, 
%\item one of equidistant hypersurfaces to a Lohnherr hypersurface in $\mathbb{C}H^2$,  
%\item 
a $2$-Hopf hypersurface in $\mathbb{C}P^2$ or $\mathbb{C}H^2$ 
such that $\<A\xi, \xi\>$ is constant on $M$.
%\end{enumerate}
\end{theorem}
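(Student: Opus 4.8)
The plan is to read the pointwise shape operator off the equality case, then combine the geodesic hypothesis with the Codazzi equation to force the $2$-Hopf property and the constancy of $\<A\xi,\xi\>$, and finally to reverse the construction. First I would analyse the equality case of $(\ref{ricci-U})$. Since $\dim_{\mathbb R}M=3$, for a unit $U\in\mathcal H_p$ the triple $\{U,\phi U,\xi\}$ is an orthonormal basis of $T_pM$, where $\phi$ denotes the structure tensor; using the Gauss equation and $\tilde K(X,Y)=c(1+3\<\phi X,Y\>^2)$ for orthonormal $X,Y$ one finds
\be
Ric(U)=5c+\kappa_U(\kappa_{\phi U}+\kappa_\xi)-\<AU,\phi U\>^2-\<AU,\xi\>^2,\qquad \|H\|^2=\tfrac19(\operatorname{tr}A)^2 .
\ee
Substituting these, $(\ref{ricci-U})$ becomes the identically valid inequality $(3\kappa_U-\kappa_{\phi U}-\kappa_\xi)^2+8(\<AU,\phi U\>^2+\<AU,\xi\>^2)\ge 0$, so the equality holds at $p$ exactly when $\<AU,\phi U\>=\<AU,\xi\>=0$ and $\kappa_{\phi U}+\kappa_\xi=3\kappa_U$. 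Writing $a=\kappa_U$, $b=\kappa_{\phi U}$ and $\mu=\<A\phi U,\xi\>$, this means $AU=aU$, $A\phi U=b\,\phi U+\mu\,\xi$ and $A\xi=\mu\,\phi U+(3a-b)\xi$. Since $M$ is non-Hopf, $\mu$ vanishes nowhere, hence $\mathcal D_p=\operatorname{span}\{\phi U,\xi\}$ at every point; thus $\mathcal D$ automatically has constant rank $2$, and ``$M$ is $2$-Hopf'' is equivalent to ``$\operatorname{span}\{\phi U,\xi\}$ is integrable''.

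Next I would compute the Levi-Civita connection in the frame $\{U,\phi U,\xi\}$ from $\nabla_X\xi=\phi AX$ and $(\nabla_X\phi)Y=\<Y,\xi\>AX-\<AX,Y\>\xi$, imposing the geodesic hypothesis $\nabla_UU=0$. One gets $\nabla_U\xi=a\,\phi U$, $\nabla_{\phi U}\xi=-b\,U$, $\nabla_\xi\xi=-\mu\,U$, $\nabla_U\phi U=-a\,\xi$, and, with two remaining functions $\sigma=\<\nabla_{\phi U}U,\phi U\>$ and $\tau=\<\nabla_\xi U,\phi U\>$, also $\nabla_{\phi U}U=\sigma\,\phi U+b\,\xi$, $\nabla_\xi U=\tau\,\phi U+\mu\,\xi$ and $\nabla_\xi\phi U=-\tau\,U$. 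In particular $[\phi U,\xi]=(\tau-b)U$, so $\operatorname{span}\{\phi U,\xi\}$ is integrable precisely when $\tau=b$, which is the target identity for the $2$-Hopf property.

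I would then substitute this data into the Codazzi equation $(\nabla_XA)Y-(\nabla_YA)X=c\{\<X,\xi\>\phi Y-\<Y,\xi\>\phi X-2\<\phi X,Y\>\xi\}$ for the pairs $(U,\phi U),(U,\xi),(\phi U,\xi)$. The $U$-components of the first two identities give $\xi a=\phi U\,a=0$; the $\xi$-component of the $(U,\xi)$-identity reads $U d=(b-\tau)\mu$ with $d:=3a-b=\<A\xi,\xi\>$; and the $(\phi U,\xi)$-identity, together with the traced Codazzi relation $\operatorname{div}A=\operatorname{grad}(\operatorname{tr}A)$, relates the remaining derivatives and lets one eliminate $\sigma$. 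Feeding these into the integrability conditions obtained by evaluating $[U,\phi U],[\phi U,\xi],[U,\xi]$ on the functions $a,b,\mu$ should show that $d$ is constant; then $Ud=0$ together with $\mu\neq 0$ forces $\tau=b$, i.e.\ $\operatorname{span}\{\phi U,\xi\}$ is integrable. Hence $M$ is $2$-Hopf and $\<A\xi,\xi\>$ is constant. I expect this last extraction to be the main obstacle: no single component of the Codazzi system yields the constancy of $\<A\xi,\xi\>$, one must keep careful track of the whole over-determined system, and the non-Hopf hypothesis $\mu\neq0$ is used in an essential way.

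For the converse, let $M\subset\tilde M^2(4c)$ be non-Hopf and $2$-Hopf with $\alpha:=\<A\xi,\xi\>$ constant. Write $A\xi=\alpha\xi+\beta V$ with $V\in\mathcal H$ a unit vector and $\beta$ nowhere zero; since $\mathcal D=\operatorname{span}\{\xi,V\}$ is $A$-invariant one has $\<AV,\phi V\>=0$, and as $A$ is symmetric it preserves $\mathcal D^\perp=\mathbb R\phi V$, so $A(\phi V)=\lambda\,\phi V$ for some function $\lambda$. Taking $U:=\phi V$ gives $\<AU,\xi\>=\<U,A\xi\>=0$ and $\<AU,\phi U\>=\<\lambda\phi V,-V\>=0$, so by the first paragraph the equality in $(\ref{ricci-U})$ for $U$ is equivalent to the single relation $3\lambda=\<AV,V\>+\alpha$; this identity, as well as $\nabla_UU=0$, would follow from the Codazzi equations of a $2$-Hopf hypersurface once $\alpha$ is constant, using the known structure of $2$-Hopf real hypersurfaces in $\mathbb{C}P^2$ and $\mathbb{C}H^2$. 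Finally, integrating the structure equations in either direction produces the explicit local models.
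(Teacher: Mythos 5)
Your setup is sound and in fact coincides with the paper's: the pointwise equality case of (\ref{ricci-U}) is exactly $AU=aU$ together with $\kappa_{\phi U}+\kappa_{\xi}=3\kappa_U$ (this is Lemma \ref{lem1} for $n=2$), your connection coefficients $\sigma,\tau$ are the paper's $\chi_1,\chi_2$ up to sign conventions, and your identity $Ud=(b-\tau)\mu$ is precisely the paper's Codazzi component (\ref{cd8}), which is indeed the hinge connecting constancy of $\<A\xi,\xi\>$ to integrability of $\mathrm{span}\{\phi U,\xi\}$. The problem is that the step you defer with ``should show that $d$ is constant'' is not a routine bookkeeping exercise --- it is the entire content of the hard direction, and you give no argument for it. In the paper this step begins with the dichotomy $(\gamma-\chi_2)\,e_3\mu=0$ obtained by evaluating $[e_2,\xi]$ on $\mu$, and in the main case ($\gamma=\chi_2$, i.e.\ your $\tau=b$, where integrability is free but constancy is not) it requires differentiating the algebraic relation (\ref{eq2}) twice along $\xi$ and $e_3$, assembling the linear system (\ref{eq6}), and, when its determinant vanishes, performing successive resultant eliminations through equations (\ref{eq7})--(\ref{eq9}) down to a factored identity involving a degree-eight polynomial $f(\gamma,\mu)$ whose further elimination needs computer algebra. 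Nothing in your outline predicts or replaces this; ``feeding these into the integrability conditions'' does not by itself close the over-determined system. You have correctly located the obstacle but not overcome it, so the ``only if'' direction is not proved.

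A secondary, smaller gap: in the converse you assert that $3\lambda=\<AV,V\>+\alpha$ and $\nabla_UU=0$ ``would follow from the Codazzi equations of a 2-Hopf hypersurface once $\alpha$ is constant.'' That is true, but the specific input needed is the structure result of Ivey--Ryan (Proposition 7 of \cite{ivey}): constancy of $\alpha$ along $\mathcal D$-leaves forces $\nabla_{\phi X}\phi X=0$ and the ODE system (\ref{2-hopf}), whose first equation $d\alpha/ds=\beta(\alpha+\gamma-3\mu)$ together with $\beta\neq0$ yields the trace condition. As written, your converse is a plausible plan rather than a proof; citing or rederiving that proposition would complete it. I would also note that your sentence ``$\mathcal D$ automatically has constant rank $2$'' silently uses $\mu\neq0$ everywhere, which is fine under the standing non-Hopf hypothesis but should be said where you first invoke it.
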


%Non-Hopf real hypersurfaces with constant principal curvatures in $\mathbb{C}H^2(4c)$ 
%have been 
%classified in \cite{bera}.  One of them is a ruled minimal homogeneous 
%hypersurface $W^3$, which was introduced by
%Lohnherr (see \cite{loh}), and the others are its equidistant hypersurfaces.
By applying Theorem \ref{thm3}, we  prove the following.
\begin{theorem}\label{thm4}
Let $M$ be a real hypersurface with  constant mean curvature
   in  $\tilde M^2(4c)$ which 
 is non-Hopf at every point.
Then $M$ admits a unit vector field $U\in\mathcal{H}$  satisfying the equality in 
$(\ref{ricci-U})$ identically if and only if $M$ is locally congruent to one of the
following{\rm :}
\begin{enumerate}
\renewcommand{\labelenumi}{(\roman{enumi})}
\item a minimal ruled hypersurface in $\mathbb{C}P^2(4c)$ or $\mathbb{C}H^2(4c)$, 
\item a $2$-Hopf hypersurface in $\mathbb{C}P^2(4c)$ such that the shape operator $A$ is represented by a matrix
\be
A=
    \begin{pmatrix}
       -7\sqrt{\frac{c}{8}}&  \sqrt{\frac{27c}{8}}\tan\Bigl(\sqrt{\frac{27c}{8}}s+d\Bigr)& 0\\
       \sqrt{\frac{27c}{8}}\tan\Bigl(\sqrt{\frac{27c}{8}}s+d\Bigr) & \sqrt{\frac{c}{8}}&0 \\
       0 & 0 & -\sqrt{\frac{c}{2}}
   \end{pmatrix}\nonumber
\ee
with respect to an orthonormal frame field $\{\xi, X, JX\}$, where $d$ is some constant and 
 $JX=\partial/\partial s$,
\item one of the equidistant hypersurfaces to a Lohnherr hypersurface in $\mathbb{C}H^2(4c)$.
\end{enumerate}
\end{theorem}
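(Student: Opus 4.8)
The plan is to first read off what the equality in $(\ref{ricci-U})$ means pointwise, then use the constant mean curvature hypothesis to place ourselves in the setting of Theorem~\ref{thm3}, and finally integrate the structure equations. Tracing the proof of Lemma~\ref{lem1} in the case $n=2$, one finds that the equality in $(\ref{ricci-U})$ holds identically for a unit field $U\in\mathcal H$ exactly when $U$ is everywhere principal, $AU=\mu U$, and $\mathrm{tr}\,A=4\mu$. Since $n=2$, the triple $\{\xi,U,\phi U\}$ is an orthonormal frame, and $AU=\mu U$ makes $\mathrm{span}\{\xi,\phi U\}$ invariant under $A$, so
\[
A\xi=\alpha\,\xi+\beta\,\phi U,\qquad AU=\mu\,U,\qquad A\phi U=\beta\,\xi+\gamma\,\phi U,\qquad \alpha+\gamma=3\mu .
\]
The hypothesis that $M$ is non-Hopf at every point says $\beta\neq0$, and $M$ having constant mean curvature says $\mathrm{tr}\,A$, hence $\mu=\mathrm{tr}\,A/4$, is constant.

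The decisive step is to show that $U$ is automatically a geodesic vector field. Using $\nabla_X\xi=\phi AX$, one checks that $\langle\nabla_UU,\xi\rangle=\langle\nabla_UU,U\rangle=0$, so $\nabla_UU=f\,\phi U$ for some function $f$, and $U$ is geodesic exactly when $f\equiv0$. To prove $f\equiv0$, I would substitute the normal form above into the Codazzi equation
\[
(\nabla_XA)Y-(\nabla_YA)X=c\bigl(\langle X,\xi\rangle\,\phi Y-\langle Y,\xi\rangle\,\phi X-2\langle\phi X,Y\rangle\,\xi\bigr)
\]
evaluated on the pairs $(\xi,U)$, $(\xi,\phi U)$ and $(U,\phi U)$. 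This yields a first-order system relating $\alpha,\beta,\gamma$ to the connection coefficients of $\{\xi,U,\phi U\}$; combining it with $\mathrm{tr}\,A$ constant (equivalently, with the derivative of $\alpha+\gamma=3\mu$) should collapse the system far enough to force $f\equiv0$. I expect this to be the main obstacle: it requires keeping track of all the connection $1$-forms of the frame and isolating the right linear combination of the Codazzi identities.

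Once $U$ is geodesic, Theorem~\ref{thm3} applies, so $M$ is locally congruent to a $2$-Hopf hypersurface of $\mathbb CP^2(4c)$ or $\mathbb CH^2(4c)$ with $\alpha=\langle A\xi,\xi\rangle$ constant; thus $\mu$, $\alpha$ and $\gamma=3\mu-\alpha$ are all constant. Being $2$-Hopf, the distribution $\mathcal D=\mathrm{span}\{\xi,\phi U\}$ is integrable, its one-dimensional orthogonal complement $\mathrm{span}\{U\}$ has geodesic integral curves by the previous step, and arclength $s$ along these, with $U=\partial/\partial s$, is the only effective coordinate. The remaining Gauss and Codazzi equations then reduce to ordinary differential equations in $s$ for the single unknown $\beta=\beta(s)$, together with algebraic constraints tying $\alpha,\mu$ to $c$. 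Integrating, and sorting by the sign of $c$ and the admissible values of the constants, yields exactly three possibilities: $\mu=\alpha=\gamma=0$, which is the minimal ruled hypersurface and occurs for both signs of $c$ (case (i)); a one-parameter family over $\mathbb CP^2(4c)$ with $\alpha=-7\sqrt{c/8}$, $\mu=\sqrt{c/8}$ and $\beta(s)=\sqrt{27c/8}\,\tan(\sqrt{27c/8}\,s+d)$ (case (ii)); and a family over $\mathbb CH^2(4c)$ which, compared with the standard description of ruled hypersurfaces of $\mathbb CH^2$, is recognized as the equidistant hypersurfaces to the Lohnherr hypersurface (case (iii)).

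For the converse, one takes each of (i)--(iii), writes its shape operator explicitly in a frame $\{\xi,X,\phi X\}$, checks by inspection that the mean curvature is constant and that $M$ is non-Hopf, and exhibits the principal unit field $U\in\mathcal H$ with $AU=\mu U$ and $\mathrm{tr}\,A=4\mu$; by the first step this is precisely the equality in $(\ref{ricci-U})$, and the proof is complete.
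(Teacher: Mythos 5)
Your proposal follows essentially the same route as the paper: read off from Lemma \ref{lem1} that $U$ is principal with $AU=\mu U$ and $\mathrm{Tr}\,A=4\mu$, use constant mean curvature to make $\mu$ constant, show $U$ is geodesic, invoke Theorem \ref{thm3}, and then integrate the structure equations with $\alpha,\gamma,\mu$ constant. The one step you flag as ``the main obstacle'' --- forcing $\nabla_UU=f\,\phi U$ to vanish --- is in fact a one-line consequence of exactly the ingredients you already have: the $U$-component of the Codazzi equation evaluated on the pair $(U,\xi)$ reads $\xi\mu=-\beta f$ (in the paper's notation $\xi\mu=-\beta\chi_3$), and since $\mu=\mathrm{Tr}\,A/4$ is constant and $\beta\neq0$ everywhere, $f\equiv0$ follows immediately; no collapsing of the full first-order system is needed. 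For the final integration the paper is more specific than your sketch: with $\gamma,\mu$ constant the third equation of (\ref{2-hopf}) becomes the algebraic relation $(\gamma-\mu)(2\gamma^2-3\gamma\mu-c)+\beta^2(2\gamma+\mu)=0$, and the trichotomy comes from $2\gamma+\mu=0$ (giving either $\gamma=\mu=0$, the minimal ruled case, or $\mu=-2\gamma=\pm\sqrt{c/2}$ with $c>0$, giving case (ii) via the second ODE of (\ref{2-hopf})) versus $2\gamma+\mu\neq0$ (forcing $\beta$ constant, hence the equidistant hypersurfaces to the Lohnherr hypersurface by Remark \ref{rem1}). One small slip: in case (ii) the constants are $\gamma=\sqrt{c/8}$ and $\mu=-\sqrt{c/2}=-2\sqrt{c/8}$, not $\mu=\sqrt{c/8}$ as you wrote.
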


 \begin{remark}
 {\rm There exist infinity many hypersurfaces which satisfy the conditions of Theorem \ref{thm3}.
 In fact,  such  hypersurfaces can be constructed by solutions of the system (\ref{2-hopf}) of ODE's in Section 4 such that $\alpha$ is constant. 
 If $\alpha$ and $\gamma$ in the system are constant, then the corresponding hypersurfaces are
 the ones described in Theorem \ref{thm4}}.
\end{remark}

 \section{Preliminaries}
 \subsection{Basic formulas and fundamental equations}
Let $M$ be a   real hypersurface in  $\tilde M^n(4c)$.
Let us denote by $\nabla$ and
 $\tilde\nabla$ the Levi-Civita connections on $M$ and $\tilde M^n(4c)$, respectively. The
 Gauss and Weingarten formulas are respectively given by
\be
 \begin{split}
 \tilde \nabla_XY&= \nabla_XY+\<AX, Y\>N, \label{gawe}\\
 \tilde\nabla_X N&= -AX
 \end{split}\nonumber
\ee
 for tangent vector fields $X$, $Y$ and a unit normal vector field $N$,
 where $A$ is the shape operator.
The mean curvature vector field $H$ is defined by 
$H=({\rm Tr}(A)/(2n-1))N.$
 The function ${\rm Tr}(A)/(2n-1)$ is called the  {\it mean curvature}.
 If it vanishes identically, then $M$ is called a {\it minimal hypersurface}.
 
For any  vector field  $X$ tangent to $M$,  we denote the tangential component of $JX$ by $\phi X$.
Then it follows from $\tilde\nabla J=0$ and  the Gauss and  Weingarten formulas that
\be
\nabla_{X}\xi=\phi AX. \label{PA}
\ee

% In particular, if $B$ vanishes identically,  then $M$ is called a {\it totally geodesic submanifold}.
 
 We denote by $R$ the Riemannian curvature tensor of $M$. Then,
 the equations of Gauss  and Codazzi are respectively given by
 \begin{align}
 &R(X, Y)Z=c[\<Y, Z\>X-\<X, Z\>Y+\<\phi Y, Z\>\phi X
 -\<\phi X, Z\>\phi Y  \label{ga}\\
 &\hskip60pt -2\<\phi X, Y\>\phi Z]
   +\<AY, Z\>AX-\<AX, Z\>AY,\nonumber\\
  &(\nabla_XA)Y-(\nabla_YA)X=c[\<X, \xi\>\phi Y-\<Y, \xi\>\phi X-2\<\phi X, Y\>\xi].\label{co}
 \end{align}

 %------------------------------------------------------------
 \subsection{An inequality concerning the Ricci curvature}
 We recall the following algebraic lemma.
 \begin{lemma}[\cite{deng}]\label{lemma2}
Let $f: \mathbb{R}^{2n-1}\rightarrow\mathbb{R}$ be a function  defined by
\be
f(x_1, x_2, \ldots, x_{2n-1})=x_{2n-1}\sum_{j=1}^{2n-2}x_j-(x_{2n-1})^2. \nonumber
\ee 
Then we have the following inequality{\rm :}
\be
f(x_1, x_2, \ldots, x_{2n-1})\leq \frac{(x_1+x_2+\cdots+x_{2n-1})^2}{8}.\nonumber
\ee
The equality sign holds if and only if $x_1+\cdots+x_{2n-2}=3x_{2n-1}$.
\end{lemma}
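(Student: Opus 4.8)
The plan is to observe that both $f$ and the right-hand side of the desired inequality depend on $x_1,\dots,x_{2n-1}$ only through the two quantities
\[
S = x_1 + x_2 + \cdots + x_{2n-2}, \qquad t = x_{2n-1}.
\]
Indeed, $f(x_1,\dots,x_{2n-1}) = tS - t^2$ and $(x_1+\cdots+x_{2n-1})^2 = (S+t)^2$, so the assertion reduces to the two-variable inequality $tS - t^2 \le (S+t)^2/8$, valid for all real $S,t$, irrespective of how the mass $S$ is distributed among the first $2n-2$ coordinates.

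The main step is then an elementary completion of the square:
\[
\frac{(S+t)^2}{8} - \bigl(tS - t^2\bigr)
= \frac{S^2 + 2St + t^2 - 8St + 8t^2}{8}
= \frac{S^2 - 6St + 9t^2}{8}
= \frac{(S-3t)^2}{8} \ge 0 .
\]
This yields the inequality at once. Moreover the difference vanishes precisely when $S = 3t$, i.e. $x_1 + \cdots + x_{2n-2} = 3x_{2n-1}$, which is exactly the stated equality condition.

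I do not anticipate any genuine obstacle: the only point that requires a moment's thought is recognizing that $f$ factors through the pair $(S,t)$, after which the bound and the characterization of the equality case are both immediate consequences of a single quadratic identity. One could alternatively run a Lagrange-multiplier argument on $\mathbb{R}^{2n-1}$, but the grouping above makes this unnecessary and keeps the equality discussion transparent.
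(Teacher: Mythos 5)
Your proof is correct and complete: the reduction to the two variables $S=x_1+\cdots+x_{2n-2}$ and $t=x_{2n-1}$, followed by the identity $\tfrac{(S+t)^2}{8}-(tS-t^2)=\tfrac{(S-3t)^2}{8}$, establishes both the inequality and the exact equality condition. The paper itself gives no proof of this lemma (it is quoted from the reference [Deng]), so there is nothing to compare against in the text; your completion-of-the-square argument is the standard one and is entirely adequate.
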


 Applying Lemma \ref{lemma2}, we have the following lemma.
\begin{lemma}\label{lem1}
 Let $M$ be a real hypersurface in $\tilde M^n(4c)$.
 %Let $X$ be a vector field on $M$ and denote by $k_X$ the first curvature of the integral curve of $X$ in
  %$\tilde M^n(4c)$. 
  Then, for any point $p\in M$ and   any unit vector  $X\in T_pM$, we have   
\be
Ric(X)\leq \frac{(2n-1)^2}{8}||H||^2+\kappa_X^2+c(2n-2+3||\phi X||^2). \label{ricci}
\ee 
The equality sign of $(\ref{ricci})$ holds at $p\in M$ if and only if 
there exists an orthonormal basis $\{e_1, e_2,  \ldots , e_{2n-1}\}$ in $T_pM$ such that $e_{2n-1}=X$ and the shape operator of $M$ in 
$\tilde M^n(4c)$ at $p$ is represented by a matrix
\be
A=
    \begin{pmatrix}
       & & & 0\\
       & \mbox{\smash{\Large\textit{B}}}& &\vdots\\
       & & & 0\\
     0 & \ldots & 0 &  \mu
   \end{pmatrix},\label{A}
\ee
where   $B$ is a symmetric $(2n-2)\times (2n-2)$ submatrix such that
\be\mathrm{Tr}(B)=3\mu. \label{B}
\ee 
 \end{lemma}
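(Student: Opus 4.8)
The plan is to compute $Ric(X)$ explicitly from the Gauss equation and then match the resulting expression to the algebraic inequality of Lemma \ref{lemma2}. First I would fix a point $p\in M$ and a unit vector $X\in T_pM$, and choose any orthonormal basis $\{e_1,\dots,e_{2n-1}\}$ of $T_pM$ with $e_{2n-1}=X$. Tracing the Gauss equation (\ref{ga}) over the basis gives
\be
Ric(X)=\sum_{i=1}^{2n-1}\<R(e_i,X)X,e_i\>
= c\bigl(2n-2+3\|\phi X\|^2\bigr)+\mathrm{Tr}(A)\<AX,X\>-\<AX,AX\>,\nonumber
\ee
where the curvature-term bookkeeping uses $\sum_i\<\phi e_i,X\>^2=\|\phi X\|^2$ and $\<\phi X,X\>=0$. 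Thus the whole inequality reduces to showing
\be
\mathrm{Tr}(A)\,\kappa_X-\|AX\|^2\ \le\ \frac{(2n-1)^2}{8}\|H\|^2+\kappa_X^2-\kappa_X^2 \nonumber
\ee
is not quite the right form, so instead I rewrite the Gauss-trace term and invoke the algebraic lemma directly.

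The key maneuver is the choice of basis. I would pick $\{e_1,\dots,e_{2n-1}\}$ so that $e_{2n-1}=X$ and $\{e_1,\dots,e_{2n-2}\}$ diagonalizes the restriction of $A$ to the "$X\!X$-block" structure as follows: write $A e_i=\sum_j a_{ji}e_j$, set $x_j:=a_{jj}=\<Ae_j,e_j\>$ for $1\le j\le 2n-1$, so $\mathrm{Tr}(A)=(2n-1)\|H\|/\!\dots$; more precisely $\mathrm{Tr}(A)=\sum_{j=1}^{2n-1}x_j$ and $(2n-1)\|H\|=|\mathrm{Tr}(A)|$. Then $\mathrm{Tr}(A)\kappa_X = x_{2n-1}\sum_{j=1}^{2n-1}x_j$, and since $\|AX\|^2=\sum_j a_{j,2n-1}^2\ge a_{2n-1,2n-1}^2=\kappa_X^2=x_{2n-1}^2$, we get
\be
Ric(X)\le c\bigl(2n-2+3\|\phi X\|^2\bigr)+x_{2n-1}\sum_{j=1}^{2n-2}x_j-x_{2n-1}^2+\kappa_X^2
= c(\dots)+f(x_1,\dots,x_{2n-1})+\kappa_X^2,\nonumber
\ee
with $f$ exactly the function of Lemma \ref{lemma2}. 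Applying that lemma, $f\le (\sum x_j)^2/8 = \mathrm{Tr}(A)^2/8 = (2n-1)^2\|H\|^2/8$, which is (\ref{ricci}).

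For the equality discussion I would unwind the two inequalities just used. Equality in Lemma \ref{lemma2} forces $\sum_{j=1}^{2n-2}x_j=3x_{2n-1}$, i.e. $\mathrm{Tr}(B)=3\mu$ with $\mu:=x_{2n-1}=\kappa_X$ and $B$ the $(2n-2)\times(2n-2)$ matrix $(a_{ij})_{i,j\le 2n-2}$. Equality in $\|AX\|^2\ge\kappa_X^2$ forces $a_{j,2n-1}=0$ for all $j\le 2n-2$, i.e. $AX=\mu X$, which is precisely the statement that $A$ has the block form (\ref{A}) in the basis $\{e_1,\dots,e_{2n-1}\}$ with $e_{2n-1}=X$. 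Conversely, if $A$ has the form (\ref{A}) with $\mathrm{Tr}(B)=3\mu$, running the computation backward gives equality. I expect the main obstacle to be purely organizational: carefully carrying the off-diagonal entries of $A$ through the Gauss-equation trace so that the reduction to $f$ is clean, and making sure the equality analysis accounts for \emph{both} inequality steps simultaneously (one cannot have equality in the lemma in one basis and equality in $\|AX\|^2\ge\kappa_X^2$ in another — but the estimate $\|AX\|^2\ge x_{2n-1}^2$ is basis-independent once $e_{2n-1}=X$ is fixed, while the lemma's equality condition only constrains the diagonal, so the two conditions are compatible and together pin down the block form).
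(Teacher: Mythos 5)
Your proposal is correct and follows essentially the same route as the paper: trace the Gauss equation to get $Ric(X)=c(2n-2+3\|\phi X\|^2)+\mathrm{Tr}(A)\kappa_X-\|AX\|^2$, discard the off-diagonal contribution $\|AX\|^2-\kappa_X^2=\sum_{j\le 2n-2}\<Ae_j,X\>^2\ge 0$, and apply Lemma \ref{lemma2} to $f(x_1,\dots,x_{2n-1})$, with equality in the two steps yielding the block form (\ref{A}) and $\mathrm{Tr}(B)=3\mu$ respectively. The brief detour about diagonalizing a block is unnecessary (the argument works for any orthonormal basis with $e_{2n-1}=X$), but nothing in the proof depends on it.
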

 \begin{proof}
 Let $M$ be a real hypersurface in a complex space form $\tilde M^n(4c)$.
 Let $X$ be any unit tangent vector at $p\in M$. 
 We choose an orthonormal basis $\{e_1, \ldots, e_{2n-1}\}$ in $T_pM$
 such 
 that $e_{2n-1}=X$. 
We put $x_j=\<Ae_j, e_j\>$. Then by using 
the equation (\ref{ga}) of Gauss and Lemma \ref{lemma2} we obtain
\be 
\begin{split}
 Ric(X)-c(2n-2+3||\phi X||^2)&=x_{2n-1}\sum_{j=1}^{2n-2}x_j-\sum_{j=1}^{2n-2}\<Ae_j, e_{2n-1}\>^2 \\
 &\leq x_{2n-1}\sum_{j=1}^{2n-2}x_j\\
 &=f(x_1, x_2, \ldots, x_{2n-1})+(x_{2n-1})^2\\
 &\leq \frac{(x_1+x_2+\cdots+x_{2n-1})^2}{8}+(x_{2n-1})^2\\
 &= \frac{(2n-1)^2}{8}||H||^2+\kappa_X^2
\end{split} \label{ineq}
 \ee
This proves inequality (\ref{ricci}). The equality sign of  (\ref{ricci}) holds if and only if 
two inequalities in (\ref{ineq}) become equalities. The application of Lemma \ref{lemma2} implies that the shape operator can be represented by (\ref{A}) with (\ref{B}).
\qed
\end{proof}

By putting $n=2$, $X=\xi$ and $X=U\in\mathcal{H}$ in (\ref{ricci}),  
we can immediately obtain (\ref{ricci-xi}) and (\ref{ricci-U}), respectively.

\section{Proof of Theorems 1.1 and 1.2}

 In order to prove Theorems 1.1 and 1.2, we first state 
 several well-known results concerning  Hopf hypersurfaces.
 We denote by $\delta$ the principal curvature corresponding to $\xi$.
 The following facts are fundamental (see Corollary 2.3 of \cite{ni}).
 \begin{theorem}\label{eigen}
 Let $M$ be a Hopf hypersurface in  $\tilde M^n(4c)$. 
 %%with $A\xi=\delta\xi$.
 %and $\delta$ 
 %the principal curvature corresponding to the characteristic vector field $\xi$. 
 Then we have
 %{\rm :}
 \begin{itemize}
 \item[(i)] $\delta$ is constant{\rm ;}
 \item[(ii)] If $X$ is a  tangent vector of $M$ orthogonal to $\xi$  such that 
  $AX=\lambda_1X$ and $A\phi X=\lambda_2\phi X$, then
 $2\lambda_1\lambda_2=(\lambda_1+\lambda_2)\delta+2c$ holds.
 \end{itemize} 
 \end{theorem}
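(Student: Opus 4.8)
The only structural ingredients available are the Gauss equation $(\ref{ga})$, the Codazzi equation $(\ref{co})$, the identity $\nabla_X\xi=\phi AX$ from $(\ref{PA})$, and the Hopf hypothesis $A\xi=\delta\xi$, where for the moment $\delta$ is just a smooth function on $M$. The plan is to extract everything from one identity obtained by specializing Codazzi. Put $Y=\xi$ in $(\ref{co})$; since $\phi\xi=0$ and $\<\phi X,\xi\>=0$ (both immediate from $\xi=-JN$ and the skew-symmetry of $\phi$), its right-hand side is $-c\,\phi X$, while expanding $(\nabla_XA)\xi=\nabla_X(\delta\xi)-A(\nabla_X\xi)$ by the product rule and $(\ref{PA})$ gives $(\nabla_XA)\xi=(X\delta)\xi+\delta\,\phi AX-A\phi AX$. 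Therefore
\[
(\nabla_\xi A)X=(X\delta)\xi+\delta\,\phi AX-A\phi AX+c\,\phi X \qquad\text{for all }X .
\]
The second ingredient is that each of $\nabla_XA$, $\nabla_\xi A$ is self-adjoint (because $A$ is self-adjoint and $\nabla$ is metric). Everything below follows from this identity and that self-adjointness.

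Statement (ii) comes out first, and it does not even require $\delta$ to be constant. At a given point take a unit $X\in\mathcal{H}$ with $AX=\lambda_1X$ and $A\phi X=\lambda_2\phi X$ and substitute into the displayed identity, using the standard relations $\<X,\xi\>=0$, $\phi^2X=-X$, $\<\phi X,\phi X\>=1$: one gets $(\nabla_\xi A)X=(\delta\lambda_1-\lambda_1\lambda_2+c)\,\phi X$ and, symmetrically, $(\nabla_\xi A)\phi X=(-\delta\lambda_2+\lambda_1\lambda_2-c)\,X$, the $(X\delta)\xi$-type terms disappearing because they are orthogonal to $\phi X$ and to $X$. Equating $\<(\nabla_\xi A)X,\phi X\>$ with $\<X,(\nabla_\xi A)\phi X\>$ gives precisely $2\lambda_1\lambda_2=(\lambda_1+\lambda_2)\delta+2c$.

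For statement (i) I would first pair the displayed identity with $\xi$: on the right the three $\phi$-terms are orthogonal to $\xi$, so only $X\delta$ survives, while on the left $\<(\nabla_\xi A)X,\xi\>=\<(\nabla_\xi A)\xi,X\>=(\xi\delta)\<X,\xi\>$, using $(\nabla_\xi A)\xi=(\xi\delta)\xi$ (which follows from $\nabla_\xi\xi=\phi A\xi=\delta\phi\xi=0$). Hence $X\delta=(\xi\delta)\<X,\xi\>$ for all $X$, i.e. the gradient of $\delta$ equals $\beta\xi$ with $\beta:=\xi\delta$. To promote this to $\beta\equiv0$ I would differentiate once more: from $\nabla_X(\beta\xi)=(X\beta)\xi+\beta\,\phi AX$ and the symmetry of the Hessian of $\delta$ one obtains
\[
(X\beta)\<Y,\xi\>-(Y\beta)\<X,\xi\>+\beta\,\<(\phi A+A\phi)X,\,Y\>=0 \qquad\text{for all }X,Y .
\]
Taking $X=\xi$ (and $(\phi A+A\phi)\xi=0$, since $A\xi=\delta\xi$ and $\phi\xi=0$) gives $Y\beta=0$ for $Y\perp\xi$, while taking $X,Y\in\mathcal{H}$ gives $\beta\,(\phi A+A\phi)=0$ on $\mathcal{H}$ (one checks $(\phi A+A\phi)X\perp\xi$). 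Thus on the open set $\{\beta\neq0\}$ the operator $A$ anticommutes with $\phi$ on $\mathcal{H}$, so any eigenvector $X\in\mathcal{H}$ of $A$ has $A\phi X=-\lambda_1\phi X$, and then (ii) forces $\lambda_1^2=-c$.

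If $c>0$ this is impossible, so $\beta\equiv0$ and $\delta$ is constant. If $c<0$ it is not yet a contradiction — $A|_{\mathcal{H}}$ would have eigenvalues $\pm\sqrt{-c}$ with eigenspaces interchanged by $\phi$ — and ruling out this rigid configuration by feeding it back into the Codazzi equation $(\ref{co})$ applied to two vectors of $\mathcal{H}$ is, I expect, the main obstacle: it is the single step that is more than a short bracket computation, and it is exactly the content worked out in the reference cited above. Granting it, $\beta\equiv0$ in every case, and (i) follows.
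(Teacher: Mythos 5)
The paper does not prove this statement at all: it is quoted as a known result, with a pointer to Corollary 2.3 of the Niebergall--Ryan survey \cite{ni}, so there is no internal proof to compare against. Measured against the standard argument (which is what that reference contains), your derivation of the master identity $(\nabla_\xi A)X=(X\delta)\xi+\delta\phi AX-A\phi AX+c\phi X$ from Codazzi with $Y=\xi$ is correct, and your proof of (ii) from it via the self-adjointness of $\nabla_\xi A$ is complete and is exactly the standard one; note in particular that (ii) indeed does not need (i). Your reduction of (i) to $\operatorname{grad}\delta=(\xi\delta)\xi$, the Hessian-symmetry step giving $\beta\,(\phi A+A\phi)=0$ on $\mathcal{H}$, and the conclusion $\lambda^2=-c$ on $\{\beta\neq 0\}$ are also correct, and they finish the case $c>0$.

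For $c<0$, however, what you have left open is a genuine gap, not a routine verification: excluding the configuration where $A$ anticommutes with $\phi$ on $\mathcal{H}$ with principal curvatures $\pm\sqrt{|c|}$ on the open set $\{\xi\delta\neq 0\}$ is precisely the hard content of part (i) in the hyperbolic case (historically it was settled only later than the projective case). The difficulty is that the cheap further consequences of Codazzi are consistent with this configuration --- for instance, pairing $(\nabla_XA)\phi X-(\nabla_{\phi X}A)X$ with $\xi$ for an eigenvector $X$ gives $2\lambda^2=-2c$ on both sides, so no contradiction appears at that level. One has to work with the full Codazzi equation along the two eigendistributions $T_{\pm\sqrt{|c|}}$ (which $\phi$ interchanges), extract the covariant derivatives of the eigenframe, and combine with the Gauss equation to contradict $\xi\delta\neq 0$; this is a multi-step computation, not a one-line bracket check. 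Since the present paper needs the theorem in $\mathbb{C}H^2$ as much as in $\mathbb{C}P^2$ (Theorems 1.1, 1.2 and 1.4 all have hyperbolic cases), this missing case cannot be waved through; as written, your proof establishes (ii) in full and (i) only for $c>0$.
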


By  results of \cite{kim} and  \cite{takagi}, for $n=2$ and $c>0$
we have the following.
\begin{theorem}\label{hopf1} 
Let $M$ be a Hopf hypersurface with constant principal curvatures in $\mathbb{C}P^2(4c)$.
Then $M$ is locally congruent to one of the following: 
\begin{itemize}
\item[$(A_1)$] a geodesic sphere with  radius $r$, where $0<r<\pi/(2\sqrt{c})$, 
\item[$(B)$] a tube of radius $r$ over a totally real totally geodesic $\mathbb{R}P^2$, where $0<r<\pi/(4\sqrt{c})$.
\end{itemize}
\end{theorem}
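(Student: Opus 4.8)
The plan is to recover this statement as the special case $n=2$, $c>0$ of the classification of Hopf hypersurfaces with constant principal curvatures in $\mathbb{C}P^n(4c)$ (Takagi \cite{takagi}, Kimura \cite{kim}); I indicate how the argument specializes to this lowest-dimensional case. First, since $M$ is Hopf, $A\xi=\delta\xi$ with $\delta$ constant by Theorem \ref{eigen}(i), and $\langle AX,\xi\rangle=\langle X,A\xi\rangle=0$ for $X\in\mathcal H$ shows that $A$ preserves the two-dimensional distribution $\mathcal H$. Choosing a unit $X\in\mathcal H_p$ with $AX=\lambda_1X$, the vector $\phi X$ is a unit vector of $\mathcal H_p$ orthogonal to $X$ and $\langle A\phi X,X\rangle=\lambda_1\langle\phi X,X\rangle=0$ forces $A\phi X=\lambda_2\phi X$; by Theorem \ref{eigen}(ii) we get $2\lambda_1\lambda_2=(\lambda_1+\lambda_2)\delta+2c$, and $\delta,\lambda_1,\lambda_2$ are constant by hypothesis. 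The argument then splits according to whether $\lambda_1=\lambda_2$.

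Suppose first $\lambda_1=\lambda_2=:\lambda$. If $\lambda=\delta$, then $M$ is totally umbilic; but taking $X=\xi$ in the Codazzi equation (\ref{co}) gives $0=(\nabla_\xi A)Y-(\nabla_Y A)\xi=c\,\phi Y$ for all $Y$, so $c=0$, a contradiction. Hence $\lambda\ne\delta$, and I would invoke the focal-point construction: the equation $\lambda=\sqrt c\cot(\sqrt c\,r)$ determines a unique $r\in(0,\pi/(2\sqrt c))$, the relation $2\lambda^2=2\lambda\delta+2c$ is then equivalent to $\delta=2\sqrt c\cot(2\sqrt c\,r)$, so every Jacobi field of $M$ adapted to $N$ vanishes at distance $r$; therefore the focal map of $M$ is constant and $M$ is an open subset of the geodesic sphere of radius $r$ about its value. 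Comparison with the known principal-curvature functions of geodesic spheres shows every $r\in(0,\pi/(2\sqrt c))$ occurs, which is case $(A_1)$.

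Now suppose $\lambda_1\ne\lambda_2$. Working in the orthonormal frame $\{e_1,e_2,e_3\}=\{\xi,X,\phi X\}$, I would read off all Levi-Civita connection coefficients from $\nabla_Y e_1=\phi AY$ together with the components of the Codazzi equation (\ref{co}) on the pairs $(e_1,e_2),(e_1,e_3),(e_2,e_3)$; in particular, with $b=\langle\nabla_{e_1}e_3,e_2\rangle$, one obtains $b(\lambda_1-\lambda_2)=c+\lambda_2\delta-\lambda_1\lambda_2$. Evaluating the Gauss equation (\ref{ga}) on $R(e_2,e_3)e_2$ then gives $b(\lambda_1+\lambda_2)=2\lambda_1\lambda_2+4c$, and eliminating $b$ with the help of $2\lambda_1\lambda_2=(\lambda_1+\lambda_2)\delta+2c$ yields $(\lambda_1-\lambda_2)(c+\lambda_1\lambda_2)=0$, hence $\lambda_1\lambda_2=-c$. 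If $\delta$ coincided with $\lambda_1$ or $\lambda_2$, then $\lambda_1\lambda_2=-c$ and the Jacobi relation would force $\delta^2=-3c<0$, which is impossible for $c>0$; so $\delta,\lambda_1,\lambda_2$ are pairwise distinct, and the Jacobi relation gives $\delta=-4c/(\lambda_1+\lambda_2)$. Finally, for a suitable orientation of $N$, I would take the focal map of $M$ associated with the principal curvature of largest absolute value; using $\lambda_1\lambda_2=-c$ one checks that it has constant rank $2$, that its image $M'$ is a surface with vanishing second fundamental form which is totally real, and hence---by the classification of totally geodesic submanifolds of $\mathbb{C}P^2(4c)$ and a congruence theorem---that $M'$ is a totally geodesic $\mathbb{R}P^2$; then $M$ is an open subset of a tube of radius $r\in(0,\pi/(4\sqrt c))$ about $M'$, which is case $(B)$.

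I expect the main obstacle to be the last step in each branch---the identification of the focal submanifold---which, beyond the lengthy but mechanical Codazzi/Gauss bookkeeping that produces the relation $\lambda_1\lambda_2=-c$ and the value of $\delta$, ultimately rests on the classification of the totally geodesic submanifolds of $\mathbb{C}P^2(4c)$ and on a rigidity theorem for submanifolds of complex space forms. The converse---that a geodesic sphere of radius $r\in(0,\pi/(2\sqrt c))$ and a tube of radius $r\in(0,\pi/(4\sqrt c))$ over a totally geodesic $\mathbb{R}P^2$ are Hopf hypersurfaces with constant principal curvatures---is a routine computation with the tube formulas.
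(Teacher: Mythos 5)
The paper does not actually prove this statement: Theorem~\ref{hopf1} is quoted directly from the classifications of Takagi and Kimura (``By results of \cite{kim} and \cite{takagi}\dots''), so there is no internal argument to compare yours against. What you have written is a reconstruction of how that classification is obtained, specialized to $n=2$, and the parts you carry out explicitly check out: $A$ preserves $\mathcal{H}$ and diagonalizes in a frame $\{\xi, X,\phi X\}$; total umbilicity is excluded by Codazzi; in the branch $\lambda_1=\lambda_2=\lambda$ one may normalize $\lambda>0$ (reversing $N$ if necessary, and $\lambda=0$ would force $c=0$), and the relation $\delta=(\lambda^2-c)/\lambda$ coming from Theorem~\ref{eigen}(ii) is exactly $\delta=2\sqrt{c}\cot(2\sqrt{c}r)$ with $\lambda=\sqrt{c}\cot(\sqrt{c}r)$, matching Theorem~\ref{A1cp}; in the branch $\lambda_1\ne\lambda_2$ your identities $\lambda_1\lambda_2=-c$ and $\delta=-4c/(\lambda_1+\lambda_2)$ are consistent with the Type $B$ data of Theorem~\ref{Bcp}, and your exclusion of $\delta\in\{\lambda_1,\lambda_2\}$ via $\delta^2=-3c$ is correct. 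The two steps you defer --- that vanishing of the adapted Jacobi fields at distance $r$ makes the focal map constant, and that in the second branch the focal map has constant rank $2$ with totally geodesic, totally real image identified by a rigidity theorem --- are precisely the nontrivial content of the tube/focal-set theory (as in \cite{cecil}) and of Kimura's theorem; as written they are invocations rather than arguments. So your proposal is sound as an outline, but it is complete only modulo essentially the same external machinery that the paper's citation of \cite{kim} and \cite{takagi} encapsulates; given that the paper itself treats the statement as quoted background, that is a defensible place to stop, provided you make the dependence on those references explicit rather than presenting the focal-map steps as established.
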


\begin{theorem}\label{A1cp}
 The Type $(A_1)$ hypersurafces in $\mathbb{C}P^2(4c)$ have two distinct principal curvatures: 
%\begin{enumerate}
%\renewcommand{\labelenumi}{(\roman{enumi})}
%\item  
$\delta=2\sqrt{c}\cot(2\sqrt{c}r)$ of multiplicity $1$ and 
%\item 
$\lambda=\sqrt{c}\cot(\sqrt{c}r)$
of multiplicity $2$.
%\end{enumerate}
\end{theorem}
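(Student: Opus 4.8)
The plan is to realise the Type $(A_1)$ hypersurface concretely as a distance sphere and to read off its shape operator from the Jacobi equation along the radial geodesics issuing from its centre. By the classification recalled in Theorem~\ref{hopf1}, a Type $(A_1)$ hypersurface in $\mathbb{C}P^2(4c)$ is locally congruent to a geodesic sphere $G(r)$ of some radius $r\in\bigl(0,\pi/(2\sqrt{c})\bigr)$ about a point $o$, i.e.\ the tube of radius $r$ over $o$. Fix $p\in M=G(r)$ and let $\gamma\colon[0,r]\to\mathbb{C}P^2(4c)$ be the unit-speed geodesic with $\gamma(0)=o$ and $\gamma(r)=p$; then $N:=\gamma'(r)$ is a unit normal of $M$ at $p$, so that $\xi=-JN=-J\gamma'(r)$. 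As $r$ varies, the spheres $G(t)$ foliate a punctured neighbourhood of $o$ by parallel hypersurfaces, and the shape operator $S(t)$ of $G(t)$ along $\gamma$ is governed by the Jacobi equation: if $Y$ is the Jacobi tensor along $\gamma$ (a linear map $\gamma'(0)^{\perp}\to\gamma'(t)^{\perp}$) with $Y(0)=0$ and $Y'(0)=I$, then $S(r)=Y'(r)Y(r)^{-1}$, the unit normal $N$ being oriented so that the principal curvatures come out with the signs in the statement; equivalently $S$ solves the matrix Riccati equation $S'=-S^2-\tilde R_{\gamma'}$ with the focal initial behaviour $S(t)\sim\tfrac1t I$ as $t\to0^+$.

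The next step is to compute the Jacobi operator $\tilde R_{\gamma'}(Y):=\tilde R(Y,\gamma')\gamma'$. Since $\mathbb{C}P^2(4c)$ has constant holomorphic sectional curvature $4c$, its curvature tensor $\tilde R$ is the bracketed term of $(\ref{ga})$ with $\phi$ replaced by $J$, and a short computation gives, for $Y\perp\gamma'$, that $\tilde R(Y,\gamma')\gamma'=cY+3\<Y,J\gamma'\>J\gamma'$. Hence $\tilde R_{\gamma'}$ has eigenvalue $4c$ on the line $\mathbb{R}\,J\gamma'$ and eigenvalue $c$ on its orthogonal complement inside $\gamma'^{\perp}$, a $2$-dimensional subspace which at $t=r$ is precisely $\mathcal{H}_p$. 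Because $\tilde\nabla_{\gamma'}(J\gamma')=J\tilde\nabla_{\gamma'}\gamma'=0$, the vector field $J\gamma'$ is parallel along $\gamma$, so this eigendecomposition is $t$-independent and $Y(t)$ stays block diagonal with respect to it. Solving the scalar Jacobi equations $y''+4c\,y=0$ and $y''+c\,y=0$ with $y(0)=0$, $y'(0)=1$ gives $y(t)=\sin(2\sqrt{c}\,t)/(2\sqrt{c})$ on the $4c$-eigenspace and $y(t)=\sin(\sqrt{c}\,t)/\sqrt{c}$ on the $c$-eigenspace. Therefore $S(r)$ has the eigenvalue $\delta=2\sqrt{c}\cot(2\sqrt{c}\,r)$ of multiplicity $1$, with eigenvector $\xi=-J\gamma'(r)$, and the eigenvalue $\lambda=\sqrt{c}\cot(\sqrt{c}\,r)$ of multiplicity $2=2n-2$ on $\mathcal{H}_p$.

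Finally I would check that $\delta$ and $\lambda$ are genuinely distinct at every point of $G(r)$. Putting $\theta=\sqrt{c}\,r\in(0,\pi/2)$ and using the identity $2\cot2\theta=\cot\theta-\tan\theta$, we have $\delta=\sqrt{c}(\cot\theta-\tan\theta)$, so $\delta=\lambda=\sqrt{c}\cot\theta$ would force $\tan\theta=0$, which is impossible on $(0,\pi/2)$. Thus $G(r)$ carries exactly the two principal curvatures asserted, with the stated multiplicities. (As a consistency check, these values satisfy the Hopf relation of Theorem~\ref{eigen}(ii): $\lambda^2=c\cot^2\theta=2c\cot\theta\cot2\theta+c=\lambda\delta+c$.) The only delicate points are the orientation bookkeeping — fixing $N$ and the sign in the Riccati/Jacobi relation so that the curvatures appear with the right signs — and the justification of the $\tfrac1t I$ behaviour of $S$ at the focal point $o$; both are routine in tube calculus, and one may alternatively simply quote this computation from the references underlying Theorem~\ref{hopf1}.
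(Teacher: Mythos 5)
Your proof is sound, but note that the paper does not prove this statement at all: Theorem~\ref{A1cp} is quoted as classical, with the classification and curvature data attributed to Takagi and Kimura (and surveyed in Niebergall--Ryan and Cecil--Ryan). What you have written is essentially the standard tube/Jacobi-field derivation that underlies those references, so you are supplying the omitted argument rather than diverging from one. The computation checks out: the Jacobi operator of $\mathbb{C}P^2(4c)$ along $\gamma'$ does split with eigenvalue $4c$ on $\mathbb{R}\,J\gamma'$ and $c$ on its orthogonal complement, $J\gamma'$ is parallel so the splitting propagates, and $y'/y$ at $t=r$ gives exactly $2\sqrt{c}\cot(2\sqrt{c}r)$ and $\sqrt{c}\cot(\sqrt{c}r)$ with multiplicities $1$ and $2$; your distinctness check and the verification of Theorem~\ref{eigen}(ii) are both correct. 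Two small blemishes, neither fatal: your displayed formula $\tilde R(Y,\gamma')\gamma'=cY+3\<Y,J\gamma'\>J\gamma'$ drops a factor of $c$ in the second term (it should be $cY+3c\<Y,J\gamma'\>J\gamma'$, as your subsequent eigenvalue $4c$ confirms you intended); and with $N=\gamma'(r)$ the outward normal and the Weingarten convention $\tilde\nabla_XN=-AX$ used in the paper, $S\sim -\tfrac1t I$ near the focal point, so the stated positive curvatures actually correspond to the inward normal $N=-\gamma'(r)$ (whence $\xi=J\gamma'(r)$). You flag this orientation issue yourself and it is indeed routine, but if you keep the proof you should fix the choice of $N$ once at the outset rather than deferring it.
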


\begin{theorem}\label{Bcp}
  The Type $B$ hypersurfaces in  $\mathbb{C}P^2(4c)$ have
   three distinct principal curvatures: 
% \begin{enumerate}
%\renewcommand{\labelenumi}{(\roman{enumi})}
%\item 
$\delta=2\sqrt{c}\tan(2\sqrt{c}r)$,
%\item 
$\lambda_1=-\sqrt{c}\cot(\sqrt{c}r)$, and 
%\item 
 $\lambda_2=
  \sqrt{c}\tan(\sqrt{c}r)$. 
%\end{enumerate}
\end{theorem}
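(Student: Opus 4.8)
\medskip
\noindent\textit{A proof proposal.}\ The conclusion is classical --- it goes back to Takagi, the present formulation being given in \cite{kim} --- and the most transparent derivation uses the focal geometry of $\mathbb{C}P^2(4c)$. The plan is to realise a Type $B$ hypersurface $M$ as the set of points at a fixed distance $r$ along the unit normal geodesics issuing from $P=\mathbb{R}P^2$, and to obtain the shape operator of $M$ by transporting the (vanishing) second fundamental form of $P$ along those geodesics by means of the Jacobi equation.

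First I would fix $p\in P$, a unit normal $\nu\in\nu_pP$, and the normal geodesic $\gamma(t)=\exp_p(t\nu)$, so that $\gamma(r)$ is a generic point of $M$. Since $P$ is totally real with $\dim_{\mathbb R}P=2$ inside $\dim_{\mathbb R}\mathbb{C}P^2=4$, the complex structure $J$ carries $\nu_pP$ isomorphically onto $T_pP$; choose an orthonormal basis with $\nu_pP=\operatorname{span}\{e_1=\nu,\,e_2\}$ and $T_pP=\operatorname{span}\{Je_1,\,Je_2\}$. Then $T_{\gamma(r)}M$ is spanned by the parallel transports along $\gamma$ of $Je_1=J\gamma'(0)$, $Je_2$, and $e_2$. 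From the ambient curvature tensor used in $(\ref{ga})$, the normal Jacobi operator $W\mapsto\tilde R(W,\gamma')\gamma'$, restricted to the orthogonal complement of $\gamma'$, has eigenvalue $4c$ on the holomorphic line $\mathbb{R}J\gamma'$ and eigenvalue $c$ on its complement; since $J$ is parallel this holds at every point of $\gamma$, so the operator acts as $4c$ along the parallel transport of $Je_1$ and as $c$ along those of $Je_2$ and $e_2$, and it preserves this parallel splitting.

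Next I would integrate the Jacobi equation $Y''+\tilde R(Y,\gamma')\gamma'=0$ with the focal initial data coming from $P$ being totally geodesic. For $v\in T_pP$ one has $Y(0)=v$, $Y'(0)=-A^P_\nu v=0$, hence $Y(t)=\cos(\sqrt{\mu}\,t)\,E(t)$ with $E$ the parallel field and $\mu$ the associated eigenvalue; for $v\in\nu_pP$ with $v\perp\gamma'(0)$ one has $Y(0)=0$, $Y'(0)=v$, hence $Y(t)=\mu^{-1/2}\sin(\sqrt{\mu}\,t)\,E(t)$. The shape operator of $M$ at $\gamma(r)$ with respect to $\gamma'(r)$ equals $-Y'(r)Y(r)^{-1}$, which is $\sqrt{\mu}\tan(\sqrt{\mu}\,r)$ on a direction of tangent type and $-\sqrt{\mu}\cot(\sqrt{\mu}\,r)$ on a direction of normal type. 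Evaluating: $Je_1$ is of tangent type with $\mu=4c$, giving $\delta=2\sqrt{c}\tan(2\sqrt{c}\,r)$; $Je_2$ is of tangent type with $\mu=c$, giving $\lambda_2=\sqrt{c}\tan(\sqrt{c}\,r)$; $e_2$ is of normal type with $\mu=c$, giving $\lambda_1=-\sqrt{c}\cot(\sqrt{c}\,r)$. Finally, since $J$ and $\gamma'$ are parallel along $\gamma$, the parallel transport of $J\gamma'(0)=Je_1$ is $J\gamma'(r)=\pm\xi$, so $\delta$ is the principal curvature attached to the characteristic vector; the three numbers are mutually distinct and finite precisely for $0<r<\pi/(4\sqrt{c})$, which is the stated range.

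The main obstacle is the bookkeeping forced by $P$ being totally \emph{real}: because $J$ interchanges $T_pP$ and $\nu_pP$, the holomorphic eigendirection $J\gamma'$ of the Jacobi operator lies \emph{tangent} to $P$, which is exactly why the characteristic principal curvature $\delta$ appears as a tangent-type $\tan$ with $\mu=4c$ rather than as a normal-type $\cot$; misclassifying any one of the three eigendirections as tangent versus normal would change both the $\tan/\cot$ alternative and the value of $\mu$. A convenient internal check is provided by Theorem~\ref{eigen}(ii): with the values above, $\lambda_1\lambda_2=-c$ and $\lambda_1+\lambda_2=-2\sqrt{c}\cot(2\sqrt{c}\,r)$, so $2\lambda_1\lambda_2=-2c=(\lambda_1+\lambda_2)\delta+2c$. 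An alternative route, which is Takagi's original one, is to lift the tube over $\mathbb{R}P^2$ through the Hopf fibration $S^5\to\mathbb{C}P^2(4c)$ to a principal $SO(3)$-orbit in $S^5$, compute the principal curvatures of that orbit in $S^5$ directly, and push them down; this avoids Jacobi fields but requires first identifying the correct orbit.
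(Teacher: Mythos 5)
Your derivation is correct, but note that the paper offers no proof of Theorem \ref{Bcp} at all: it is quoted as part of the Takagi--Kimura classification (\cite{takagi}, \cite{kim}), so any argument here is necessarily "different from the paper's." Your tube-plus-Jacobi-field computation is the standard self-contained route and all the details check out: the Lagrangian condition $J\,T_pP=\nu_pP$ correctly places $J\gamma'(0)$ in $T_pP$, so the holomorphic eigendirection (eigenvalue $4c$ of the Jacobi operator, which you compute correctly from the curvature tensor underlying $(\ref{ga})$) is of tangent type and yields $\delta=2\sqrt{c}\tan(2\sqrt{c}r)$, while the remaining tangent and normal directions give $\sqrt{c}\tan(\sqrt{c}r)$ and $-\sqrt{c}\cot(\sqrt{c}r)$; your verification of the Hopf relation of Theorem \ref{eigen}(ii) is a genuine confirmation that the $\tan$/$\cot$ and $\mu=4c$ versus $\mu=c$ assignments are not scrambled. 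The only point worth flagging is the sign convention: with your choice $A=-Y'(r)Y(r)^{-1}$ relative to the outward normal $\gamma'(r)$, the geodesic sphere of Theorem \ref{A1cp} would come out with the opposite signs to those listed there, so the paper is implicitly using different normal orientations for the Type $(A_1)$ and Type $(B)$ families; this is harmless for everything the paper does with these values (all relevant conditions, such as $(\ref{B})$ and the Ricci equalities, are invariant under $A\mapsto -A$), but a careful write-up should state which unit normal is being used. What your approach buys is a proof independent of the full classification machinery; what the paper's citation buys is brevity, since these eigenvalue tables are tabulated in \cite{ni} and \cite{cecil}.
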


By  a result of \cite{be}, for $n=2$ and $c<0$ we have the following.
\begin{theorem}\label{hopf2}
Let $M$ be a Hopf hypersurface with constant principal curvatures in $\mathbb{C}H^2(4c)$.
 $M$ is  locally congruent to one of the following: 
\begin{itemize}
\item[$(A_0)$] a horosphere, 
\item[$(A_{1,0})$] a geodesic sphere of radius $r$, where $0<r<\infty$, 
\item[$(A_{1,1})$] a tube of  radius $r$ over a totally geodesic $\mathbb{C}H^1$, where $0<r<\infty$, 
\item[$(B)$] a tube of radius $r$ over a totally real totally geodesic $\mathbb{R}H^2$, where $0<r<\infty$.
\end{itemize}
\end{theorem}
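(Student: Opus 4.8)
The statement is Berndt's classification, and the plan is to combine the Hopf condition with Theorem~\ref{eigen} and the Codazzi equation (\ref{co}) to force the shape operator of $M$ into a rigid algebraic normal form, and then to recover $M$ as an open part of a tube (or a horosphere) by analysing its focal set. By Theorem~\ref{eigen}(i) the principal curvature $\delta$ of $\xi$ is constant, and since $A$ is symmetric with $A\xi=\delta\xi$ it preserves the holomorphic distribution $\mathcal H$, on which (as $n=2$) $\phi$ acts as a rotation by $\pi/2$. Combining the Hopf condition with (\ref{co}) gives the identity $2A\phi A=\delta(A\phi+\phi A)+2c\phi$; applied to a unit $X\in\mathcal H$ with $AX=\lambda X$ and $\lambda\neq\delta/2$ it shows that $\phi X$ is again principal, with $A\phi X=\lambda'\phi X$ and $\lambda'=(\delta\lambda+2c)/(2\lambda-\delta)$, i.e.\ $2\lambda\lambda'=(\lambda+\lambda')\delta+2c$, recovering Theorem~\ref{eigen}(ii). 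Since $\dim M=3$, $A$ has at most three distinct eigenvalues, and the argument splits according to whether $A|_{\mathcal H}$ is a multiple of the identity (the case $A\phi=\phi A$) or has two distinct eigenvalues.

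If $A\phi=\phi A$, the eigenvalue relation reduces to $\lambda^2-\delta\lambda-c=0$, so $\lambda=\tfrac12\bigl(\delta\pm\sqrt{\delta^2+4c}\,\bigr)$ and necessarily $\delta^2+4c\geq 0$. By the classical theorems of Okumura and Montiel a real hypersurface of a non-flat complex space form with $A\phi=\phi A$ is locally congruent to a geodesic sphere, a tube over a totally geodesic complex submanifold, or a horosphere; in $\mathbb C H^2(4c)$ the only totally geodesic complex submanifolds are points and copies of $\mathbb C H^1$, so for $\delta^2+4c>0$ the two roots $\lambda$ of the quadratic yield the geodesic sphere $(A_{1,0})$ and the tube over $\mathbb C H^1$ $(A_{1,1})$, while the double root $\delta^2+4c=0$ (which forces $\lambda=\delta/2=\sqrt{-c}$, $\delta=2\sqrt{-c}$) yields the horosphere $(A_0)$. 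One can bypass Okumura--Montiel by integrating the Gauss--Codazzi system directly for $A=\mathrm{diag}(\delta,\lambda,\lambda)$ and matching the solution with these three families via Jacobi fields along the normal geodesics.

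If $A|_{\mathcal H}$ has two distinct eigenvalues, then $\mathcal H$ splits into principal line distributions $V$ and $\phi V$ with constant eigenvalues $\lambda_1\neq\lambda_2$ subject to $2\lambda_1\lambda_2=(\lambda_1+\lambda_2)\delta+2c$. Expanding (\ref{co}) on the pairs $(X,\xi)$, $(\phi X,\xi)$ and $(X,\phi X)$ for $X\in V$ and using (\ref{PA}) to evaluate $\nabla_{\bullet}\xi$, the constancy of $\delta,\lambda_1,\lambda_2$ annihilates most of the connection coefficients $\langle\nabla_{e_i}e_j,e_k\rangle$ and pins the remaining ones down; from this one reads that, at the focal distance $r$ determined by the larger of $\lambda_1,\lambda_2$ through $\sqrt{-c}\coth(\sqrt{-c}\,r)$, the normal exponential map of $M$ has locally constant rank $2$, its fibres are the integral curves of one of the two line distributions, and its image is a totally real totally geodesic $\mathbb R H^2$ in $\mathbb C H^2(4c)$; hence $M$ is an open part of the tube of radius $r$ over such an $\mathbb R H^2$, i.e.\ type $(B)$. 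The incidental possibility $\delta\in\{\lambda_1,\lambda_2\}$ is merely the special radius at which this tube has $\xi$ of multiplicity two. Conversely, a direct Jacobi-field computation along the normal geodesics of each of the four model hypersurfaces confirms that each is Hopf with exactly the asserted constant principal curvatures, which completes the equivalence.

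The step I expect to be the real obstacle is the passage, in the non-commuting case, from the infinitesimal rigidity (constancy of the eigenvalues together with the resulting constraints on $\nabla$) to the \emph{global} statement that the focal set is a \emph{complete} totally geodesic $\mathbb R H^2$ and that the focal map has rank $2$ at every point; in Berndt's treatment this is handled by recognizing $M$ as a leaf of a homogeneous cohomogeneity-one foliation of $\mathbb C H^2(4c)$, and it is precisely here that the non-flatness of the ambient space — which is what produces the $2c$ term in Theorem~\ref{eigen}(ii), and with it the rigid hyperbolic form of the admissible principal curvatures — is used in an essential way.
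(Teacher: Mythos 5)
The paper offers no proof of this statement: it is quoted as a known classification, imported directly from Berndt's paper (\cite{be} in the bibliography), so there is no internal argument to measure yours against. Your sketch reconstructs the standard architecture of that classification --- split on whether $A\phi=\phi A$ on $\mathcal H$, dispose of the commuting case by the Okumura/Montiel theorem, and run a focal-set argument in the non-commuting case --- and that architecture is sound. The commuting half of your argument is essentially complete modulo the external citation.

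In the non-commuting case, however, there is a concrete gap, and it is not only the global focal-set issue you flag at the end. Theorem \ref{eigen}(ii) is a \emph{single} polynomial relation among $(\delta,\lambda_1,\lambda_2)$, so the admissible constant spectra a priori form a two-parameter family, whereas the type $(B)$ tubes form only a one-parameter family. The Codazzi equation (\ref{co}) cannot close this gap by itself: writing $\nabla_\xi X=p\,\phi X$ for a unit principal field $X$ with $AX=\lambda_1X$, $A\phi X=\lambda_2\phi X$, the Codazzi pairs $(X,\xi)$ and $(\phi X,\xi)$ each determine $p=(c+\lambda_1\delta-\lambda_1\lambda_2)/(\lambda_1-\lambda_2)$, and the compatibility of the two expressions reproduces exactly Theorem \ref{eigen}(ii), yielding nothing new; the pair $(X,\phi X)$ only kills the remaining connection coefficients. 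The second, independent relation --- without which the candidate focal submanifold need not be totally geodesic and the conclusion fails --- must come from the \emph{Gauss} equation (\ref{ga}): evaluating $\<R(X,\phi X)\phi X,X\>$ both intrinsically and via (\ref{ga}) forces $(\lambda_1+\lambda_2)p=-2\lambda_1\lambda_2-4c$, and equating the two expressions for $p$ cuts the spectrum down to the one-parameter type $(B)$ family (on which $p=\delta/2$). Your proposal never invokes the Gauss equation, so as written the principal curvatures are underdetermined and the focal distance $r$ you introduce is not yet pinned to a tube over a totally geodesic $\mathbb R H^2$. Relatedly, in $\mathbb C H^2$ one must also verify that a focal point exists at all, i.e.\ that some principal curvature exceeds $\sqrt{|c|}$ in modulus --- normal geodesics in complex hyperbolic space need not have focal points, which is precisely what makes the hyperbolic classification harder than Takagi's and is again settled only by the extra algebraic relation.
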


\begin{theorem}\label{A0}
The Type $(A_0)$ hypersurfaces in $\mathbb{C}H^2(4c)$
have two distinct principal curvatures: 
%\begin{enumerate}
%\renewcommand{\labelenumi}{(\roman{enumi})}
%\item  
$\delta=2\sqrt{|c|}$ of multiplicity $1$ and
%\item 
$\lambda=\sqrt{|c|}$ 
of multiplicity $2$.
%\end{enumerate}
\end{theorem}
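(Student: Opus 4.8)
The plan is to recognise that this is the classical computation of the second fundamental form of a horosphere in complex hyperbolic space, and to present it so that it uses only data already at hand. I would give it in two ways, a quick limiting argument and a self-contained model computation.

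The quickest route is a limiting argument from geodesic spheres. In $\mathbb{C}H^2(4c)$ the geodesic sphere of radius $r$ (the type $(A_{1,0})$ hypersurface) is well known to have exactly two constant principal curvatures, namely $2\sqrt{|c|}\coth(2\sqrt{|c|}\,r)$ in the direction of $\xi$ with multiplicity $1$ and $\sqrt{|c|}\coth(\sqrt{|c|}\,r)$ on $\mathcal H$ with multiplicity $2$. A horosphere is obtained from such geodesic spheres by pushing the centre to a point at infinity while $r\to\infty$; the second fundamental forms converge smoothly, so the principal curvatures of the horosphere are the limits of the two expressions above. Since $\coth t\to 1$ as $t\to\infty$, these limits are $\delta=2\sqrt{|c|}$ of multiplicity $1$ and $\lambda=\sqrt{|c|}$ of multiplicity $2$, and they are distinct because $c\ne 0$.

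A self-contained alternative I would write out is a direct computation in the solvable-group (Iwasawa) model $\mathbb{C}H^2(4c)=KAN$. The horosphere $\mathcal M$ centred at the point at infinity fixed by $N$ is the orbit $N\cdot o$; if $H_0\in\mathfrak a$ is the unit vector tangent to the ambient geodesics orthogonal to $\mathcal M$, then, up to the choice of unit normal $N$, one has $\xi=\mp JH_0$ and the shape operator of $\mathcal M$ equals $\pm\operatorname{ad}(H_0)$ restricted to $\mathfrak n=\mathfrak g_\alpha\oplus\mathfrak g_{2\alpha}$. Since $\operatorname{ad}(H_0)$ acts as the scalar $\alpha(H_0)$ on $\mathfrak g_\alpha$ and as $2\alpha(H_0)$ on $\mathfrak g_{2\alpha}$, with $\dim\mathfrak g_\alpha=2$, $\dim\mathfrak g_{2\alpha}=1$, $\mathfrak g_{2\alpha}=\mathbb R\xi$ and $\mathfrak g_\alpha=\mathcal H$ (orthogonality of root spaces), the shape operator is already diagonal, with eigenvalue $2\alpha(H_0)$ of multiplicity $1$ and $\alpha(H_0)$ of multiplicity $2$. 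Normalising so that the holomorphic sectional curvature equals $4c$ forces $\alpha(H_0)=\sqrt{|c|}$, which yields the asserted values; as a consistency check, $\delta=2\sqrt{|c|}$ is constant and the relation $2\lambda_1\lambda_2=(\lambda_1+\lambda_2)\delta+2c$ of Theorem \ref{eigen}, with $\lambda_1=\lambda_2=\lambda$, reduces to $\lambda^2-\lambda\delta-c=0$, i.e.\ $|c|-2|c|-c=0$, which holds since $c=-|c|$.

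The point needing genuine care — and the reason Theorem \ref{eigen} alone does not settle the matter — is that its quadratic relation is obeyed by every type $A$ hypersurface, so isolating the pair $(\delta,\lambda)$ for the horosphere really requires the extrinsic input above. Thus the main obstacle is making that input rigorous: either justifying the smooth convergence of the second fundamental forms of the geodesic spheres to that of the horosphere, or, in the Iwasawa picture, verifying the identification $\xi\in\mathfrak g_{2\alpha}$ with the correct orientation and pinning down $\alpha(H_0)=\sqrt{|c|}$ from the $4c$–normalisation.
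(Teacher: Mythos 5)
Your two arguments are both correct, but note that the paper does not prove this statement at all: Theorems 3.2--3.9, including this one, are quoted verbatim from the classification literature (here from Berndt's paper \cite{be}, via the survey \cite{ni}), so there is no ``paper proof'' to match. Of your two routes, the limiting argument is the lighter one in context, since it derives the horosphere data from Theorem \ref{A10} (the geodesic spheres, with curvatures $2\sqrt{|c|}\coth(2\sqrt{|c|}\,r)$ and $\sqrt{|c|}\coth(\sqrt{|c|}\,r)$) by letting $r\to\infty$; its only real burden is justifying that the second fundamental forms converge along the family of spheres through a fixed point with centres going to a point at infinity, which is standard but not free. The Iwasawa computation is self-contained and is essentially how these values are obtained in the sources: the shape operator of the orbit of the nilpotent factor is $\pm\operatorname{ad}(H_0)$ on $\mathfrak g_\alpha\oplus\mathfrak g_{2\alpha}$, already diagonal with eigenvalues $\alpha(H_0)$ (multiplicity $2$) and $2\alpha(H_0)$ (multiplicity $1$), and the normalisation of the holomorphic sectional curvature to $4c$ on the holomorphic plane spanned by $H_0$ and $JH_0$ forces $\alpha(H_0)=\sqrt{|c|}$. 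Your consistency check against Theorem \ref{eigen}(ii) is also right ($|c|-2|c|-c=0$ since $c=-|c|$), and you correctly observe that this identity alone cannot single out the horosphere's curvatures, since all type $A$ hypersurfaces satisfy it. One cosmetic caution: you reuse the letter $N$ for the nilpotent group while the paper uses $N$ for the unit normal; rename one of them if this is to be spliced in.
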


\begin{theorem}\label{A10}
The Type $(A_{1,0})$ hypersurfaces in $\mathbb{C}H^2(4c)$ have two distinct principal curvatures: 
 %\begin{enumerate}
%\renewcommand{\labelenumi}{(\roman{enumi})}
%\item 
 $\delta=2\sqrt{|c|}\coth(2\sqrt{|c|}r)$  of multiplicity $1$ and 
%\item 
$\lambda=\sqrt{|c|}\coth(\sqrt{|c|}r)$
of multiplicity $2$.
%\end{enumerate}
\end{theorem}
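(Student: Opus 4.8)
The plan is to compute the two principal curvatures directly, exploiting the fact that a Type $(A_{1,0})$ hypersurface of $\mathbb{C}H^2(4c)$ is by definition a geodesic sphere of radius $r$, i.e.\ the tube of radius $r$ about a point $p$. First I would set up the standard Jacobi-field (Riccati) description of the shape operator of such a tube. Fix a unit-speed geodesic $\gamma$ of $\mathbb{C}H^2(4c)$ with $\gamma(0)=p$, and let $\mathcal{R}$ be the Jacobi operator $\mathcal{R}(Y)=\tilde R(Y,\gamma')\gamma'$ along $\gamma$. Denote by $A(r)$ the shape operator at $\gamma(r)$ of the geodesic sphere of radius $r$, taken with respect to the unit normal whose orientation makes the principal curvatures of small spheres positive; this is the choice consistent with the sign conventions $\xi=-JN$, $\nabla_X\xi=\phi AX$ used in the paper. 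Transporting an orthonormal frame parallel along $\gamma$, the family $A(r)$ is then governed by the matrix Riccati equation $A'+A^2+\mathcal{R}=0$ with the Euclidean normalization $A(r)=\frac1r\mathrm{Id}+O(r)$ as $r\to 0^{+}$.

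The second step is to evaluate $\mathcal{R}$ from the curvature tensor of a complex space form of constant holomorphic sectional curvature $4c$. Substituting $\gamma'$, $J\gamma'$, and a unit vector $X$ orthogonal to $\mathrm{span}\{\gamma',J\gamma'\}$ gives $\mathcal{R}(J\gamma')=4c\,J\gamma'$ and $\mathcal{R}(X)=cX$. Hence $\mathcal{R}$ has the eigenvalue $4c$ on the line $\mathbb{R}J\gamma'$ — which along the hypersurface is exactly $\mathbb{R}\xi$ — and the eigenvalue $c$ on the complementary $2$-plane, which is the holomorphic distribution $\mathcal{H}$. Since $\mathbb{C}H^2(4c)$ is locally symmetric, $\mathcal{R}$ is parallel along $\gamma$, so choosing the parallel frame to diagonalize $\mathcal{R}$ makes it a constant diagonal matrix; by uniqueness of solutions the Riccati equation then keeps $A(r)$ diagonal, and the system decouples into the scalar equations $\delta'+\delta^2+4c=0$ in the $\xi$-direction (multiplicity $1$) and $\lambda'+\lambda^2+c=0$ on $\mathcal{H}$ (multiplicity $2n-2=2$), each with the singular normalization $\sim 1/r$ at $r=0$.

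The third step is to integrate these two Riccati ODEs. For $c<0$ the solution singular at $r=0$ is $\delta(r)=2\sqrt{|c|}\coth(2\sqrt{|c|}\,r)$ and $\lambda(r)=\sqrt{|c|}\coth(\sqrt{|c|}\,r)$, which is precisely the asserted list. It remains to confirm these are two \emph{distinct} principal curvatures: using $2\coth(2x)=\coth x+\tanh x$ one gets $\delta-\lambda=\sqrt{|c|}\tanh(\sqrt{|c|}\,r)>0$ for every $r>0$. As independent consistency checks, one verifies with $\lambda_1=\lambda_2=\lambda$ (legitimate since $\phi$ preserves the $\lambda$-eigenspace) that $\delta$ is constant and $\lambda^2=\lambda\delta+c$, in accordance with Theorem \ref{eigen}, and that $\delta(r)\to 2\sqrt{|c|}$, $\lambda(r)\to\sqrt{|c|}$ as $r\to\infty$, recovering the horosphere values of Theorem \ref{A0}.

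I expect the only delicate point to be the sign and normalization bookkeeping in the Riccati equation — which unit normal is chosen and how the resulting shape operator is matched to the paper's conventions — because a wrong choice would interchange $\coth$ and $\tanh$, or reverse an overall sign, in the final formulas; pinning down the correct branch is exactly what the Euclidean limit $A(r)\sim\frac1r\mathrm{Id}$ and the requirement $\delta(r)\to 2\sqrt{|c|}$ are for. Everything else, namely the curvature computation for $\mathcal{R}$ and the integration of the two scalar Riccati equations, is routine.
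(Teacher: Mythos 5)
Your proposal is correct. Note that the paper offers no proof of this statement at all: it is quoted directly from Berndt's classification (reference \cite{be}), so there is nothing in the text to compare against. Your Jacobi-field/Riccati derivation is the standard route by which these principal-curvature tables are obtained in the cited literature, and the details check out: $\mathcal{R}$ has eigenvalues $4c$ on $\mathbb{R}J\gamma'$ and $c$ on the orthogonal complement, the singular solutions of $\delta'+\delta^2+4c=0$ and $\lambda'+\lambda^2+c=0$ with $c<0$ are exactly $2\sqrt{|c|}\coth(2\sqrt{|c|}r)$ and $\sqrt{|c|}\coth(\sqrt{|c|}r)$, and your identity $2\coth 2x=\coth x+\tanh x$ gives distinctness; the consistency checks against Theorem \ref{eigen} (namely $\lambda^2=\lambda\delta+c$) and the horosphere limit of Theorem \ref{A0} are also valid. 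Your attention to the normal orientation is well placed, since with the paper's convention $\tilde\nabla_XN=-AX$ the inward normal is the one yielding $A\sim\frac1r\mathrm{Id}$, and then $\xi=-JN=J\gamma'$ is indeed the $4c$-eigendirection of $\mathcal{R}$.
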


\begin{theorem}\label{A11}
The Type $(A_{1,1})$ hypersurfaces
in  $\mathbb{C}H^2(4c)$ have two distinct principal curvatures: 
%\begin{enumerate}
%\renewcommand{\labelenumi}{(\roman{enumi})}
%\item 
 $\delta=2\sqrt{|c|}\coth(2\sqrt{|c|}r)$  of multiplicity $1$ and 
%\item 
$\lambda=\sqrt{|c|}\tanh(\sqrt{|c|}r)$
of multiplicity $2$. 
%\end{enumerate}
\end{theorem}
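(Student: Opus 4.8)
The plan is to compute the shape operator of the tube $M=M_r$ directly, using the theory of tubes over submanifolds; this is tractable here because, although $\mathbb{C}H^2(4c)$ does not have constant curvature, its curvature operator is completely explicit. Let $P=\mathbb{C}H^1(4c)$ be the totally geodesic complex curve, fix $x\in P$, and let $\nu$ be a unit normal vector of $P$ at $x$. Since $P$ is a complex submanifold, its normal space $\nu_xP$ is $J$-invariant of real dimension $2$, hence spanned by $\nu$ and $J\nu$. Let $\gamma$ be the unit-speed geodesic of $\mathbb{C}H^2(4c)$ with $\gamma(0)=x$, $\dot\gamma(0)=\nu$; then $p=\gamma(r)\in M$, the unit normal of $M$ at $p$ is $\dot\gamma(r)$, and so the characteristic vector there is $\xi=-J\dot\gamma(r)$. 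I would identify $T_pM$ with the space of values at $t=r$ of the $M_r$-Jacobi fields $Y$ along $\gamma$, namely those with $Y(0)\in T_xP$ and $Y'(0)=-S_\nu Y(0)$ (where $S_\nu$ is the shape operator of $P$), together with the single ``rotational'' Jacobi field with $Y(0)=0$ and $Y'(0)$ a nonzero multiple of the remaining normal direction $J\nu$. Since $P$ is totally geodesic, $S_\nu=0$, so all the foot-point Jacobi fields satisfy $Y'(0)=0$; counting dimensions gives $2+1=3=\dim M$.

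Next I would integrate the Jacobi equation $Y''+R(Y,\dot\gamma)\dot\gamma=0$. Because $\mathbb{C}H^2(4c)$ has constant holomorphic sectional curvature $4c$, the operator $R(\,\cdot\,,\dot\gamma)\dot\gamma$ restricted to $\dot\gamma^{\perp}$ has exactly two eigenvalues: $4c$ on the line spanned by $J\dot\gamma$, and $c$ on the $2$-plane orthogonal to $\dot\gamma$ and $J\dot\gamma$. Parallel transport along $\gamma$ commutes with $J$, so these eigenspaces are parallel along $\gamma$ and the equation decouples into the scalar ODEs $f''+4cf=0$ and $g''+cg=0$. For $c<0$ the relevant solutions are $f(t)=\cosh(2\sqrt{|c|}\,t)$, $\tfrac{1}{2\sqrt{|c|}}\sinh(2\sqrt{|c|}\,t)$ and $g(t)=\cosh(\sqrt{|c|}\,t)$, $\tfrac{1}{\sqrt{|c|}}\sinh(\sqrt{|c|}\,t)$.

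Now I would match initial data and read off the principal curvatures, using the standard fact that on such a Jacobi field the shape operator of the tube acts by $A\bigl(Y(r)\bigr)=Y'(r)$, for the appropriate choice of unit normal. The tangent space $T_xP$ is orthogonal to both $\nu$ and $J\nu$, so every foot-point Jacobi field stays in the parallel $2$-plane orthogonal to $\dot\gamma$ and $J\dot\gamma$; with $Y'(0)=0$ it is $Y(t)=\cosh(\sqrt{|c|}\,t)\,E(t)$ for a parallel $E$, whence $Y'(r)/Y(r)=\sqrt{|c|}\tanh(\sqrt{|c|}\,r)$. There are two such fields, giving the principal curvature $\lambda=\sqrt{|c|}\tanh(\sqrt{|c|}\,r)$ of multiplicity $2$. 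The rotational field has $Y(0)=0$ and $Y'(0)$ proportional to $J\dot\gamma(0)$, so it stays in the line spanned by $J\dot\gamma$, namely $Y(t)=\tfrac{1}{2\sqrt{|c|}}\sinh(2\sqrt{|c|}\,t)\,J\dot\gamma(t)$, whence $Y'(r)/Y(r)=2\sqrt{|c|}\coth(2\sqrt{|c|}\,r)$; since $Y(r)$ is a multiple of $J\dot\gamma(r)=-\xi$, this is the principal curvature $\delta=2\sqrt{|c|}\coth(2\sqrt{|c|}\,r)$ in the characteristic direction, of multiplicity $1$. Finally, writing $u=\sqrt{|c|}\,r$ and using $\coth(2u)=(1+\tanh^2u)/(2\tanh u)$ gives $\delta-\lambda=\sqrt{|c|}/\tanh u\neq 0$, so the two principal curvatures are everywhere distinct, and since $\cosh$ and $\sinh(2u)$ never vanish for $u>0$ the tube is smooth for every $r\in(0,\infty)$.

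The part I expect to require the most care is the bookkeeping of initial conditions and orientations: identifying precisely the $3$-dimensional family of Jacobi fields that spans $T_pM$, pinning down the sign in $A\bigl(Y(r)\bigr)=\pm Y'(r)$ consistently with the Weingarten convention $\tilde\nabla_XN=-AX$ used in the paper, and checking that the construction does not depend on the chosen unit normal $\nu$ — the last point following because the isotropy subgroup of $P$ in the isometry group of $\mathbb{C}H^2(4c)$ acts transitively on the unit normal sphere, so the resulting tubes are mutually congruent. Everything else is the routine integration of a decoupled pair of linear second-order ODEs.
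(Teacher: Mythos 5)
Your computation is correct, and it checks out against the stated values: the Jacobi operator $R(\cdot,\dot\gamma)\dot\gamma$ of $\mathbb{C}H^2(4c)$ does have eigenvalue $4c$ on $\mathbb{R}J\dot\gamma$ and $c$ on its orthocomplement in $\dot\gamma^{\perp}$, the totally geodesic $\mathbb{C}H^1$ forces $Y'(0)=0$ on the foot-point Jacobi fields and $T_xP\perp\{\nu,J\nu\}$ places them in the eigenvalue-$c$ block, and the resulting ratios $\sqrt{|c|}\tanh(\sqrt{|c|}r)$ (multiplicity $2$) and $2\sqrt{|c|}\coth(2\sqrt{|c|}r)$ (multiplicity $1$, in the $\xi$-direction) are exactly the claimed $\lambda$ and $\delta$; your identity $\delta-\lambda=\sqrt{|c|}/\tanh(\sqrt{|c|}r)$ settles distinctness. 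The comparison with the paper is slightly lopsided: the paper offers no proof at all here, treating Theorems 3.4--3.9 as known consequences of Berndt's classification of Hopf hypersurfaces with constant principal curvatures in $\mathbb{C}H^n$, so your argument is not an alternative to an in-paper proof but a self-contained substitute for the citation. It is, however, essentially the standard derivation (the tube method of Cecil--Ryan, which is also how the cited source obtains these numbers), so nothing genuinely new is being traded off; what your version buys is transparency about where each factor of $2$ and each $\tanh$ versus $\coth$ comes from. The one point you rightly flag as delicate, the sign in $A(Y(r))=\pm Y'(r)$, resolves as follows under the paper's convention $\tilde\nabla_XN=-AX$: taking $N=-\dot\gamma(r)$ (pointing toward the axis $\mathbb{C}H^1$) gives $\tilde\nabla_YN=-Y'$, hence $AY(r)=Y'(r)$ and the positive values listed in the theorem; this is consistent with the positivity of the Type $(A_1)$ curvatures $\sqrt{c}\cot(\sqrt{c}r)$ for small $r$ elsewhere in Section 3.
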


\begin{theorem}\label{Bch}
The Type $(B)$ hypersurfaces
 in $\mathbb{C}H^2(4c)$ have three  principal curvatures: 
%\begin{enumerate}
%\renewcommand{\labelenumi}{(\roman{enumi})}
%\item  
$\delta=2\sqrt{|c|}\tanh(2\sqrt{|c|}r)$,
%\item  
$\lambda_1=\sqrt{|c|}\coth(\sqrt{|c|}r)$, and 
%\item 
$\lambda_2=\sqrt{|c|}\tanh(\sqrt{|c|}r)$.
%\end{enumerate}
\end{theorem}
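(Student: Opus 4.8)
The plan is to realize the Type $(B)$ hypersurface as the tube of radius $r$ over the totally geodesic, totally real $\mathbb{R}H^2\subset\mathbb{C}H^2(4c)$ and to compute its shape operator by the Jacobi field (equivalently, Riccati) method along the normal geodesics. Write $P=\mathbb{R}H^2$, fix $p\in P$, and let $\gamma$ be the unit-speed geodesic of $\tilde M^2(4c)$ with $\gamma(0)=p$ and $\dot\gamma(0)=\nu\in N_pP$. Since $P$ is totally real of maximal real dimension, $J(T_pP)=N_pP$, so $J\nu\in T_pP$; I would choose an orthonormal basis $\{X_1,X_2\}$ of $T_pP$ with $\nu=JX_1$, so that $J\dot\gamma(0)=-X_1$ is tangent to $P$, while $\{X_2, JX_2\}$ spans the orthogonal complement of $\mathrm{span}\{\dot\gamma, J\dot\gamma\}$.

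The first step is to record the ambient curvature operator $\tilde R(\,\cdot\,,\dot\gamma)\dot\gamma$, whose ambient part is displayed as the $c$-terms in (\ref{ga}). For a complex space form of holomorphic sectional curvature $4c$ this operator, restricted to $\dot\gamma^\perp$, has $J\dot\gamma$ as an eigenvector with eigenvalue $4c$ and acts as $c\,\mathrm{Id}$ on the complement spanned by $\{X_2, JX_2\}$. Because $\tilde\nabla J=0$, parallel transport along $\gamma$ commutes with $J$, so the parallel frame $\{E_1(t), E_2(t), E_3(t)\}$ obtained by transporting $\{J\dot\gamma(0), X_2, JX_2\}$ stays adapted to this eigenstructure for all $t$, with $E_1=J\dot\gamma$.

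Next I would solve the Jacobi equation $Y''+\tilde R(Y,\dot\gamma)\dot\gamma=0$ in each eigendirection, with the initial conditions dictated by $P$ being totally geodesic (so its second fundamental form vanishes). The fields coming from variations tangent to $P$ satisfy $Y(0)\neq 0$, $Y'(0)=0$; in the $E_1$-direction (curvature $4c=-(2\sqrt{|c|})^2$) this gives $Y(t)=\cosh(2\sqrt{|c|}\,t)E_1(t)$, and in the $E_2$-direction (curvature $c=-(\sqrt{|c|})^2$) it gives $Y(t)=\cosh(\sqrt{|c|}\,t)E_2(t)$. The remaining field comes from rotating $\nu$ inside the normal bundle, i.e.\ a variation through normal geodesics, so $Y(0)=0$, $Y'(0)=E_3(0)$, giving $Y(t)=\tfrac{1}{\sqrt{|c|}}\sinh(\sqrt{|c|}\,t)E_3(t)$. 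Reading the shape operator of the tube from $S_rY(r)=-Y'(r)$ (with $\dot\gamma(r)$ as unit normal) then yields the three principal curvatures $2\sqrt{|c|}\tanh(2\sqrt{|c|}r)$, $\sqrt{|c|}\tanh(\sqrt{|c|}r)$, and $\sqrt{|c|}\coth(\sqrt{|c|}r)$, up to the overall sign fixed by the orientation of the normal. Since the unit normal of the tube is $N=\dot\gamma(r)$, the characteristic vector is $\xi=-JN=-E_1(r)$, so the $E_1$-direction is precisely the $\xi$-direction; hence $\delta=2\sqrt{|c|}\tanh(2\sqrt{|c|}r)$, while $\lambda_1=\sqrt{|c|}\coth(\sqrt{|c|}r)$ and $\lambda_2=\sqrt{|c|}\tanh(\sqrt{|c|}r)$ correspond to $E_3$ and $E_2$.

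I expect the main obstacle to be bookkeeping rather than analysis: one must carefully separate the Jacobi fields arising from directions tangent to $P$ (which carry $\cosh$-type solutions because the focal behaviour is governed by $Y'(0)=0$) from the single field arising from the normal sphere direction (which carries a $\sinh$-type solution because $Y(0)=0$), and fix the sign and orientation conventions consistently so that the computed eigenvalues come out as stated. The one genuinely geometric input, which explains why the holomorphic curvature $4c$ appears only in the $\xi$-direction, is the totally real condition $J(T_pP)=N_pP$: it forces $J\dot\gamma$ to be tangent to $P$, so the $4c$-eigendirection of the curvature operator coincides with the distinguished $\xi$-direction of the tube.
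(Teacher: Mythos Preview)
Your argument is correct and is the standard Jacobi-field/Riccati computation for the shape operator of a tube over a totally geodesic submanifold; the identification of the $4c$-eigendirection of $\tilde R(\cdot,\dot\gamma)\dot\gamma$ with the $\xi$-direction via the totally real condition $J(T_pP)=N_pP$ is exactly the right geometric point. Note, however, that the paper does not supply its own proof of this statement: Theorems~\ref{hopf2}--\ref{Bch} are quoted as known facts from Berndt's classification~\cite{be}, so there is no in-paper argument to compare against. What you have written is essentially how those principal curvatures are derived in the original sources.

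One small point worth nailing down rather than leaving as ``up to the overall sign'': with the paper's Weingarten convention $\tilde\nabla_X N=-AX$ and the \emph{outward} normal $N=\dot\gamma(r)$, your relation $A\,Y(r)=-Y'(r)$ produces the negatives of the stated values (as it would also for the geodesic spheres in Theorem~\ref{A10}). The listed curvatures correspond to the inward-pointing normal $N=-\dot\gamma(r)$; fixing this choice explicitly removes the ambiguity.
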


By applying the above-mentioned results, we shall prove Theorems 1.1 and 1.2.

\medskip
\noindent{\it Proof of Theorem} 1.1.
Let $M$ be a real hypersurface  in 
$\tilde M^2(4c)$. 
Assume that $\xi$ satisfies the equality in (\ref{ricci-xi}) identically. 
Then,  (\ref{A}) implies that
$M$ is a Hopf hypersurface with $\delta=\mu$. 
It follows from  Theorem \ref{eigen} that
$\mu$ is constant and 
$2\mathrm{det}(B)=\mathrm{Tr}(B)\mu+2c$. 
Combining this and (\ref{B}) shows  that
$M$ is a Hopf hypersurface
with constant principal curvatures, which is one of the hypersurfaces
described in Theorems \ref{hopf1} and \ref{hopf2}.
By applying  Theorems \ref{A1cp}, \ref{Bcp}, \ref{A0}, \ref{A10}, \ref{A11} and \ref{Bch}, we can determine hypersurfaces satisfying 
  (\ref{B}), that is,   
  $3\delta=
2\lambda$ or $\lambda_1+\lambda_2$ as follows:

If $c>0$, then 
$M$ is  a hypersurface of type $A_1$ in $\mathbb{C}P^2(4c)$ satisfying  
\be 3\cot(2\sqrt{c}r)=\cot(\sqrt{c}r), \nonumber\ee
which has a unique solution  given by $r=\pi/(6\sqrt{c})$ over $(0, \pi/(2\sqrt{c}))$.
If $c<0$, then $M$ is  a hypersurface of type $B$ in $\mathbb{C}H^2(4c)$ satisfying
\be 6\tanh(2\sqrt{|c|}r)=\tanh(\sqrt{|c|}r)+\coth(\sqrt{|c|}r), \nonumber\ee
 which has a unique positive solution given by 
$r=\ln(2+\sqrt{3})/(4\sqrt{|c|})$. 

Conversely, it is easy to verify that these hypersurfaces satisfy identically (\ref{ricci-xi}) by considering
the components of their  shape operators.
\qed

\vskip10pt
\noindent {\it Proof of Theorem} 1.2.
Let $M$ be a Hopf hypersurface  in 
$\tilde M^2(4c)$.
Assume that $M$ admits a unit vector filed $U\in\mathcal{H}$ satisfying the equality in (\ref{ricci-U}) identically. 
It follows from Lemma \ref{lem1} that  there exists an orthonormal frame field
 $\{e_1, e_2, e_3\}$ such that $e_1=\xi$, $\phi e_2=e_3=U$, 
  and the shape operator is given by
 \be A\xi=(3\mu-\gamma)\xi, \quad Ae_2=\gamma e_2,  \quad Ae_3=\mu e_3, \label{3mu}\ee
 for some smooth functions $\gamma$ and $\mu$.
 Theorem \ref{eigen} shows that
 $3\mu-\gamma$ is constant 
  and $2\gamma\mu=(\gamma+\mu)(3\mu-\gamma)+2c$ holds.
 Hence, $M$ is a Hopf hypersurface with constant principal curvatures. 
 %By the classification by Kimura \cite{kim} and berndt \cite{be} 
 By applying Theorems \ref{A1cp}, \ref{Bcp}, \ref{A0}, \ref{A10}, \ref{A11} and \ref{Bch},
 we can determine hypersurfaces satisfying (\ref{3mu}), that is, $\delta=2\lambda$, $3\lambda_1-\lambda_2$ or $3\lambda_2-\lambda_1$ as follows:

If $c>0$, then  $M$ is  a hypersurface of type $B$ in $\mathbb{C}P^2(4c)$ satisfying
 \be 
 2\tan(2\sqrt{|c|}r)=3\tan(\sqrt{|c]}r)+\cot(\sqrt{|c|}r),\nonumber\ee which has a 
  unique solution given by $r=\pi/(6\sqrt{c})$ over $(0, \pi/(4\sqrt{|c|}))$.
  In this case, the principal curvature vector field corresponding to
 $\lambda_2=\sqrt{c/3}$ satisfy (\ref{ricci-U}) identically.
 If $c<0$, then $M$ is 
  a horosphere  in $\mathbb{C}H^2(4c)$. In this case,  the principal curvature vector field corresponding to $\lambda=\sqrt{|c|}$ satisfy (\ref{ricci-U}) identically. \qed

\section{Proof of Theorems 1.3 and 1.4}
{\it Proof of Theorem} 1.3.
Let $M$ be a 2-Hopf hypersurface in  $\tilde M^2(4c)$. 
We choose an orthonormal frame field $\{\xi, X, \phi X\}$ such that the distribution $\mathcal{D}$ spanned by $\{\xi, X\}$ is 
the smallest $A$-invariant distribution.
Then the shape operator $A$ is written by
\begin{align} 
& A\xi=\alpha\xi+\beta X,\ \  AX=\gamma X+\beta\xi, \ \ A\phi X=\mu \phi X \nonumber
 \end{align}
for some functions $\alpha$, $\beta$, $\gamma$ and $\mu$, where $\beta$
is non-zero at each point. Therefore, the shape operator takes the form of (\ref{A}).
According to  Proposition 7 in  \cite{ivey} and its proof, if $\alpha=\<A\xi, \xi\>$ is constant along $\mathcal{D}$-leaves, then 
$\nabla_{\phi X}\phi X=0$ and all the other components of $A$ are also constant 
along $\mathcal{D}$-leaves, and satisfy 
\be
\begin{split}
\frac{d\alpha}{ds}&=\beta(\alpha+\gamma-3\mu),\\
\frac{d\beta}{ds}&=\beta^2+\gamma^2+\mu(\alpha-2\gamma)+c,\\
\frac{d\gamma}{ds}&=\frac{(\gamma-\mu)(\gamma^2-\alpha\gamma-c)}{\beta}
+\beta(2\gamma+\mu),
\end{split} \label{2-hopf}
\ee
where $d/ds$ stands for the derivative with respect to $\phi X$.

If $\<A\xi, \xi\>$ is constant on $M$, then by the first equation in (\ref{2-hopf}) we have $\alpha+\gamma-3\mu=0$, which yields (\ref{B}). Therefore, by applying Lemma \ref{lem1}, 
we see that $\phi X$ satisfies
(\ref{ricci-U}) identically.

 Conversely, suppose that $M$ is a real hypersurface  in  $\tilde M^2(4c)$ which 
 is non-Hopf at every point, and admits a unit geodesic vector field $U\in\mathcal{H}$  satisfying the equality in 
$(\ref{ricci-U})$ identically.
Then it follows from Lemma \ref{lem1} that there exists  an 
   orthonormal frame field $\{e_1, e_2, e_3\}$
   such  that $e_1=\xi$, $\phi e_2=e_3=U$ and the shape operator is given by
\begin{align} 
& A\xi=(3\mu-\gamma)\xi+\beta e_2,\ \  Ae_2=\gamma e_2+\beta\xi, \ \ Ae_3=\mu e_3 \label{A2}
 \end{align}
 for some functions $\beta$, $\gamma$ and $\mu$.
We denote by $\mathcal{V}$  the distribution spanned by $\{\xi, e_2\}$.
 Since $\xi$ is not a principal vector everywhere, 
we have $\beta\ne 0$ on $M$, and therefore,
 $\mathcal{D}$ is invariant under the shape operator and 
 of rank 2.
  We shall prove that $\mathcal{V}$ is integrable and $\<A\xi, \xi\>(=3\mu-\gamma)$ is constant.

By using (\ref{PA}) and (\ref{A2}), we have 
\be
\nabla_{e_2}\xi=\gamma e_3, \ \ \nabla_{e_3}\xi=-\mu e_2, \ \ \nabla_{\xi}\xi=\beta e_3. \label{nab1}
\ee
Since $e_3$ is a unit geodesic vector field, we have $\nabla_{e_3}e_3=0$.
This, together with (\ref{nab1}), yields 
  \be
\begin{split}
&   \nabla_{e_2}e_2 =\chi_1e_3, \ \ \nabla_{e_3}e_2=\mu\xi, \ \ \nabla_{\xi}e_2 =\chi_2e_3, \\
& \nabla_{e_2}e_3 =-\chi_1e_2-\gamma\xi, \ \ \nabla_{\xi}e_3=
-\chi_2e_2-\beta\xi. \label{nab2}
\end{split}
\ee
for some functions  $\chi_1$ and $\chi_2$.
 %
 %By applying the equation (\ref{co}) of Codazzi , using (\ref{n1})-(\ref{n3}),
%we obtain the following:
 %\begin{itemize}
 %\item 

%With respect to the Gauss-Codazzi equations, we  are going to 
%state only equations that are useful in this  proof.

We deduce from (\ref{nab1}), (\ref{nab2}) and    the equation  (\ref{co}) of Codazzi that
\begin{align}
e_2\mu &=0, \label{cd1}\\
e_3\gamma &=(\gamma-\mu)\chi_1+\beta(\gamma+2\mu),\label{cd2}\\
e_3\beta &=-\gamma^2+\beta\chi_1+3\mu^2+2c\label{cd3},\\
e_2\beta &=\xi\gamma,\label{cd4} \\
e_2\gamma &=-\xi\beta, \label{cd5} \\
\beta\chi_1+(\mu-\gamma)\chi_2 &=\beta^2+\gamma^2-2\gamma\mu-c,\label{cd6}\\
\xi\mu &=0, \label{cd7}\\
e_3(3\mu-\gamma) &=\beta(\chi_2-\gamma). \label{cd8} 
\end{align}
By the equation (\ref{ga}) of Gauss for $\<R(e_2, e_3)e_3, e_2\>$ and $\<R(\xi, e_2)e_3, e_2\>$, we obtain
\begin{align}
 & e_3\chi_1-2\mu\gamma-\chi_1^2-(\gamma+\mu)\chi_2-4c=0, \label{ga1}\\
& \xi\chi_1=e_2\chi_2.\label{ga2} 
\end{align}

It follows from (\ref{nab1}), (\ref{nab2}), (\ref{cd1}) and  (\ref{cd7}) that
\be
0=[e_2, \xi]\mu=(\nabla_{e_2}\xi-\nabla_{\xi}e_2)\mu=(\gamma-\chi_2)e_3\mu.\label{eq00}
\ee
Thus, we have  $\gamma=\chi_2$ or $e_3\mu=0$.
%We divide our discussion into two cases.

{\bf Case A}: $\gamma=\chi_2$. 
%In this case, it follows from (\ref{A2}) that
%the mean curvature is non-constant.
%from (\ref{eq00}) we have 
%\be\gamma=\chi_2, \label{eq0}\ee 
In this case, since $\nabla_{e_2}\xi-\nabla_{\xi}e_2=0$ holds, 
  $\mathcal{V}$ is integrable, and therefore,
 $M$ is a 2-Hopf hypersurface.

Equations (\ref{cd6}), (\ref{cd8}) and (\ref{ga1}) are reduced to
\begin{align}
&\beta\chi_1-\beta^2+3\gamma\mu-2\gamma^2+c=0, \label{eq2}\\
& 3e_3\mu=e_3\gamma,\label{eq2.1}\\
&e_3\chi_1=\chi_1^2+\gamma^2+3\gamma\mu+4c, \label{eq2.2}
\end{align}
respectively. 
 Eliminating $\chi_1$ from (\ref{cd3}) and (\ref{eq2}) leads to
\be
e_3\beta=\beta^2+\gamma^2-3\gamma\mu+3\mu^2+c. \label{eq3}
\ee
By combining (\ref{cd5}) and  (\ref{ga2}), we have 
\be
\xi\chi_1=-\xi\beta.\label{eq1}
\ee
By using (\ref{nab1}), (\ref{nab2}), (\ref{cd2}), (\ref{cd4}), (\ref{cd7}), (\ref{eq3}) and (\ref{eq1})  we obtain the following:
\begin{align}
e_3(\xi\beta) &=(\nabla_{e_3}\xi-\nabla_{\xi}e_3)\beta+\xi(e_3\beta) \nonumber \\
&=(\gamma-\mu)\xi\gamma+\beta(\xi\beta)+\xi(\beta^2+\gamma^2-3\gamma\mu+3\mu^2+c),\nonumber\\
&=3\beta(\xi\beta)+(3\gamma-4\mu)\xi\gamma, \label{eq3.1}\\
e_3(\xi\gamma) &=(\nabla_{e_3}\xi-\nabla_{\xi}e_3)\gamma+\xi(e_3\gamma)  \nonumber \\
&=(\mu-\gamma)\xi\beta+\beta(\xi\gamma)+\xi[(\gamma-\mu)\chi_1+\beta(\gamma+2\mu)],\nonumber\\
&=(4\mu-\gamma)\xi\beta+(2\beta+\chi_1)\xi\gamma.\label{eq3.2}
\end{align}

By differentiating (\ref{eq2}) with respect to 
$\xi$, and using (\ref{cd7}) and (\ref{eq1}),  we obtain
\be
(\chi_1-3\beta)\xi\beta+(3\mu-4\gamma)\xi\gamma=0. \label{eq4}
\ee
Moreover, by differentiating (\ref{eq4}) with respect to  $e_3$, and using (\ref{cd2}), (\ref{eq2.1}), (\ref{eq2.2}), (\ref{eq3}), (\ref{eq3.1}) and (\ref{eq3.2}), we have
\be
(\chi_1^2+3\beta\chi_1-12\beta^2+2\gamma^2-7\gamma\mu+3\mu^2+c)\xi\beta+
2[(\mu-2\gamma)\chi_1+(6\mu-10\gamma)\beta]\xi\gamma=0. \label{eq5}
\ee
Equations (\ref{eq4}) and (\ref{eq5}) can be rewritten as 
\be
    \begin{pmatrix}
       a_{11}&a_{12} \\
       a_{21}& a_{22}\\
   \end{pmatrix}
  \begin{pmatrix}
  \xi\beta \\
  \xi\gamma
  \end{pmatrix}=\begin{pmatrix}
  0 \\
  0
  \end{pmatrix}, \label{eq6}
\ee
where the components of the square matrix are given by
\begin{align*}
& a_{11}=\chi_1-3\beta, \\
&a_{12}=3\mu-4\gamma,\\
&a_{21}=\chi_1^2+3\beta\chi_1-12\beta^2+2\gamma^2-7\gamma\mu+3\mu^2+c, \\
&a_{22}=2[(\mu-2\gamma)\chi_1+(6\mu-10\gamma)\beta].
\end{align*}

We divide Case A into two subcases.

{\bf Case A.1}: $a_{11}a_{22}-a_{21}a_{12}=0$.  
Eliminating $\chi_1$ from this equation and (\ref{eq2}) shows
\be
4\beta^4(4\gamma-\mu)+\beta^2(16\gamma^3+2c\mu-56\gamma^2\mu+48\gamma\mu^2-9\mu^3)-\mu(c-2\gamma^2+3\gamma\mu)^2=0. \label{eq7}
\ee 
 Differentiating (\ref{eq7}) with respect to $e_3$ and using (\ref{cd2}),  (\ref{eq2.1}) and 
 (\ref{eq3}), we get
 \be 
\begin{split}
&4\beta^5
   (59\gamma+10\mu)+\beta^3(194c\gamma+376\gamma^3-32c\mu-1024\gamma^2\mu+ 645\gamma\mu^2+ 
       36\mu^3)\\
&\beta\Bigl[92\gamma^5-c^2(\gamma-10\mu)-656\gamma^4\mu+
1617\gamma^3\mu^2- 
   1818\gamma^2\mu^3+918\gamma\mu^4\\
 &-162\mu^5+2c(50\gamma^3-152\gamma^2\mu+129\gamma\mu^2-27\mu^3)\Bigr]\\
       &+(\gamma-\mu)\Bigl[44\beta^4+ 
    \beta^2(2 c+88\gamma^2-240\gamma\mu+117\mu^2)\\
       &+2c(2\gamma^2+6\gamma\mu
       -9\mu^2)-\gamma(4\gamma^3+24\gamma^2\mu-81\gamma\mu^2
    +54\mu^3)-c^2\Bigr]\chi_1=0.
\end{split} \label{eq8}
\ee
Eliminating $\chi_1$ from (\ref{eq8}) and (\ref{eq2}), we have
 \be
 \begin{split}
 &4\beta^6(-70\gamma+\mu)+ 
 \beta^4\Bigl[-552\gamma^3+1572\gamma^2\mu-1134\gamma\mu^2
 +81\mu^3-2c(76\gamma+5\mu)\Bigr]\\
 &+\beta^2\Bigl[c^2(4\gamma-13\mu)
 -c(20\gamma^3+22\gamma^2\mu-123\gamma\mu^2+81\mu^3)\\
 &- 
    3(88\gamma^5-532\gamma^4\mu+1140\gamma^3\mu^2-1086\gamma^2\mu^3+441\gamma\mu^4- 
       54\mu^5)\Bigr]\\
       &-(\gamma-\mu)
 (c-2\gamma^2-15\gamma\mu+18\mu^2)(c-2\gamma^2+3\gamma\mu)^2=0.
 \end{split}\label{eq9}
 \ee
  Eliminating $\beta$ from (\ref{eq9}) and (\ref{eq7}) gives
\be
(4\gamma-3\mu)(c-2\gamma^2+3\gamma\mu)^3f(\gamma, \mu)=0,\label{factor}\nonumber
\ee
where $f(\gamma, \mu)$ is a polynomial given by
\begin{align*}
f(\gamma, \mu)=&4608\gamma^8-28032\mu\gamma^7+(77760\mu^2-64c)\gamma^6
-(133248\mu^3+3168c\mu)\gamma^5\\
&+(155520\mu^4+9696c\mu^2+32c^2)\gamma^4
-(121392\mu^5+21176c\mu^3-528c^2\mu)\gamma^3\\
&+(52920\mu^6+19500c\mu^4+2640c^2\mu^2)\gamma^2
-(7938\mu^7+2556c\mu^5+20c^2\mu^3)\gamma\\
&+243\mu^8+216c\mu^6
+42c^2\mu^4.
\end{align*}

If  $4\gamma-3\mu=0$, then by (\ref{eq2.1})
we get $e_3\mu=0$.
If $c-2\gamma^2+3\gamma\mu=0$, then
differentiating it with respect to  $e_3$ implies $(\mu-\gamma)e_3\mu=0$, and hence,
$(\mu^2+c)e_3\mu=0$, which shows $e_3\mu=0$.
If $f(\gamma, \mu)=0$, then by differentiating $f(\gamma, \mu)=0$ with respect to
 $e_3$
and using (\ref{eq2.1}),  we obtain 
 $g(\gamma, \mu)e_3\mu=0$, 
where 
 %\begin{align*}
 %g(\gamma, \mu)&=82560r^7-433152r^6u+108c^2u^3-6372cu^5-21870u^7+r^5(-4320 c+999936 u^2)\\
 %&+r^4 (-28128cu-1376640u^3)+r^3(912c^2+52824cu^2+1259280u^4) \\
% &+r^2(10032c^2u-112584cu^3-775008u^5)+r(15780c^2u^2+104220cu^4+261954u^6)
 %\end{align*}
$g(\gamma, \mu)$ is a non-trivial polynomial in $\gamma$ and $\mu$ which is different from $f(\gamma, \mu)$.
Eliminating $\gamma$ from $f(\gamma, \mu)=0$ and $g(\gamma, \mu)e_3\mu=0$, we get
$p(u)e_3\mu=0$ for a non-trivial polynomial $p(\mu)$ in $\mu$.
We do not list $g(\gamma, \mu)$ and $p(\mu)$ explicitly, however, these polynomials
 can be recovered
quickly by using
a computer algebra program.

Consequently, in any case we have $e_3\mu=0$, which together with
(\ref{cd1}) and (\ref{cd7}) proves that $\mu$ is constant. 
Since $\gamma$ satisfies a polynomial equation with constant coefficients,  $\gamma$  
must be constant. Therefore we conclude that 
$\<A\xi, \xi\>$ is constant.

{\bf Case A.2}: $a_{11}a_{22}-a_{21}a_{12}\ne 0$. 
From (\ref{eq6}) we obtain
\be
\xi\beta=\xi\gamma=0. \label{eq10}\ee
It follows from (\ref{cd5}) and (\ref{eq10}) that $e_2\gamma=0$. 
Using (\ref{cd1}), (\ref{cd7}),  (\ref{eq2.1}) and (\ref{eq10}) yields that $\<A\xi, \xi\>$ is constant.

{\bf Case B}: $e_3\mu=0$. In this case, 
 by (\ref{cd1}) and (\ref{cd7}) we see that $\mu$ is constant.
 
 Combining (\ref{cd2}) and (\ref{cd8}) yields
 \be
 (\gamma-\mu)\chi_1+\beta\chi_2=-2\beta\mu. \label{eq11}
 \ee
 Solving (\ref{cd6}) and (\ref{eq11}) for $\chi_1$ and $\chi_2$, we get
\be
\begin{split}
\chi_1&=\frac{\beta(\beta^2+\gamma^2-4\gamma\mu+2\mu^2-c)}
{\beta^2+(\gamma-\mu)^2},\\
\chi_2&=\frac{(c-\gamma^2+2\gamma\mu)(\gamma-\mu)-\beta^2(\gamma+\mu)}
{\beta^2+(\gamma-\mu)^2}.\label{eq12}
\end{split}
\ee 
 Substituting  (\ref{eq12}) into (\ref{ga1}) and using (\ref{cd2}) and (\ref{cd3}), we obtain
 \be
 (\gamma-\mu)h(\beta, \gamma)=0,
 \ee
where $h(\beta, \gamma)$ is given by  the following function:
\be 
 h(\beta, \gamma)=
 (c-2\mu^2)(\beta^2+\gamma^2)+6\mu^3\gamma-3\mu^4+c^2.\nonumber\ee

We divide  Case B into two subcases.

{\bf Case B.1}: $\gamma-\mu=0$. In this case, $\gamma$ is constant. By (\ref{cd2}), we get
$\gamma+2\mu=0$, which shows that $\gamma=\mu=0$. 
From (\ref{cd8}) we have $\chi_2=0$. Consequently, $M$  is a minimal
2-Hopf hypersurface with
$\<A\xi, \xi\>=0$.

{\bf Case B.2}: $h(\beta, \gamma)=0$.  We find that 
the differentiation of  this equation with respect to  $e_3$ gives us no information. 
Thus, we differentiate  $h(\beta, \gamma)=0$
with respect to  $\xi$ and $e_2$. Then, using (\ref{cd4}) and (\ref{cd5}), we get
\be
    \begin{pmatrix}
       h_{\beta}&h_{\gamma} \\
       -h_{\gamma}& h_{\beta}\\
   \end{pmatrix}
  \begin{pmatrix}
  \xi\beta \\
  \xi\gamma
  \end{pmatrix}=\begin{pmatrix}
  0 \\
  0
  \end{pmatrix},\label{h}
\ee
where $h_{\beta}$ and $h_{\gamma}$ denote partial derivatives of $h$ with respect to $\beta$ and $\gamma$, respectively. 
If $h_{\beta}^2+h_{\gamma}^2=0$, then we obtain $c=0$. Hence this case cannot occur.
Thus, we deduce from (\ref{h}), ({\ref{cd4}) and (\ref{cd5}) that
\be\xi\beta=\xi\gamma=e_2\beta=e_2\gamma=0. \label{eq13}\ee
By (\ref{nab1}), (\ref{nab2}) and (\ref{eq13}), we have
\be
0=e_2(\xi\gamma)-\xi(e_2\gamma)=(\nabla_{e_2}\xi-\nabla_{\xi}e_2)\gamma=(\gamma-\chi_2)e_3\gamma.\nonumber
\ee
Combining this and (\ref{cd8}), we obtain 
$\gamma-\chi_2=e_3\gamma=0$, which together with (\ref{eq13}) yields that $\mathcal{V}$ is integrable and 
$\gamma$ is constant. As a consequence, $M$ is a 2-Hopf hypersurfaces such that
$\<A\xi, \xi\>$  is   constant. \qed
%If $c-2\mu^2\ne 0$, then $k(\beta, \gamma, \mu)=0$ implies that $\beta$ also is constant.
%Hence, $M$ has constant principal curvatures. 

%If $c-2\mu=0$, then $k(\beta, \gamma, \mu)=0$  
%implies that $2r+u=0$.

%\begin{remark}
%{\rm From the equation (\ref{ga}) of Gauss, we obtain no new information  other than (\ref{ga1}) and (\ref{ga2}).}
%\end{remark}

\begin{remark}\label{rem1}
%There exist infinity many hypersurfaces described in Theorem \ref{thm3}.
%In fact, such  hypersurfaces can be constructed by solutions of the system (\ref{2-hopf}) of ODE's in Section 4 such that $\alpha$ is constant (cf. Proposition 7 in \cite{ivey}).
%For example, if $\alpha=\gamma=0$, then the corresponding hypersurfaces are minimal ruled hypersurface, which are classified in \cite{ada}. 
{\rm 
We solve  the system (\ref{2-hopf}) of ODE's under the condition that
$\alpha$, $\beta$ and $\gamma$  are constant. Then $c<0$ and the shape operator can be  expressed as
\be
    A=\sqrt{-c}\begin{pmatrix}
       3u-u^3& (1-u^2)^{\frac{3}{2}}& 0 \\
        (1-u^2)^{\frac{3}{2}}& u^3 & 0\\
        0 & 0 & u
   \end{pmatrix}\nonumber
\ee
with respect to an orthonormal frame field $\{\xi, X, \phi X\}$,
where $u$ is  a  constant in the range $-1<u<1$. 
If $u=0$, then $M$ is  a ruled minimal homogeneous 
hypersurface  $W^3$ in $\mathbb{C}H^2(4c)$ which was introduced by
Lohnherr (see \cite{loh}), otherwise,  $M$ is one of the equidistant hypersurfaces to $W^3$
(see \cite{bera} and Section 6.4 in \cite{ivey2}). 

%By applying Theorem \ref{thm3}, we  prove the following.
%In addition, if $M$ is minimal, it is a  Lohnherr hypersurface.
%of equidistant hypersurfaces to a 
}
 \end{remark}
\begin{remark}\label{rem2}
{\rm 
It follows from  \cite[Proposition 8.27]{cecil} that
 a real hypersurface $M$ in $\tilde M^2(4c)$ 
 is a  minimal ruled hypersurface if and only if there exists a unit vector field $X$ on $M$, which is 
 orthogonal to $\xi$ and satisfies 
 \be 
A\xi=\beta X, \ \ AX=\beta\xi, \ \ A\phi X=0. \label{ruled}\nonumber
\ee
Thus, a hypersurface described  in Case B.1 in the proof  of Theorem \ref{thm3}
is nothing but a minimal ruled hypersurface.
%By (\ref{cd8}) we find that
 %the distribution spanned by $\{\xi, e_2\}$ is integrable. Thus, $M$ is a 2-Hopf hypersurface.
Minimal ruled real hypersurfaces in non-flat complex space forms  have been classified in \cite{ada}. }
\end{remark}

\medskip

{\it Proof of Theorem }\ref{thm4}.
%It is easy to check that in Case (ii),  elements of the shape operator satisfy (\ref{2-hopf}). 
%From the ``if" part of Theorem \ref{thm3}, Remarks \ref{rem1} and \ref{rem2},
%it is clear that the hypersurfaces in Cases (i) and (iii) admit 
 %a unit vector field $U\in\mathcal{H}$  satisfying the equality in 
%$(\ref{ricci-U})$ identically.
%By the ``if" part of Lemma \ref{lem1}, considering the components of the shape operators, 
%we see that the hypersurfaces described in Theorem \ref{thm4} has constant mean curvature and 
The hypersurfaces described in Theorem \ref{thm4} have constant mean curvature.
Since  their shape operators are expressed as
(\ref{A}) and satisfy (\ref{B}), by  the ``if" part of Lemma \ref{lem1} we see that
these hypersurfaces 
admit 
 a unit vector field $U\in\mathcal{H}$  satisfying the equality in 
$(\ref{ricci-U})$ identically.
 
 Conversely, suppose that $M$ be a real hypersurface with constant mean curvature
   in  $\tilde M^2(4c)$ which 
 is non-Hopf at every point, and admits a unit vector field $U\in\mathcal{H}$  satisfying the equality in 
$(\ref{ricci-U})$ identically.
Similarly to the proof of Theorem \ref{thm3}, we can choose an 
   orthonormal frame field $\{e_1, e_2, e_3\}$
   such that that $e_1=\xi$, $\phi e_2=e_3=U$ and
    the shape operator takes the form (\ref{A2}). Then (\ref{nab1}) holds.
However, since we do not assume that $U$ is a geodesic vector field, we have
$\nabla_{e_3}e_3=\chi_3e_2$ for some function $\chi_3$. Therefore, the equation
$\nabla_{e_3}e_2=\mu\xi$ in (\ref{nab2}) is replaced by $\nabla_{e_3}e_2=-\chi_3e_3+\mu\xi$.

From the equation (\ref{co}) of Codazzi for $X=e_3$ and $Y=\xi$, comparing
the coefficient of $e_3$,  we obtain $\xi\mu=-\beta\chi_3$ instead of (\ref{cd7}).  By (\ref{A2}), the constancy of the 
mean curvature yields that $\mu$ is constant. Hence, we get $\chi_3=0$, that is, $e_3$ is a geodesic vector field. By Theorem \ref{thm3}, $M$ is a 2-Hopf hypersurface such that $\<A\xi, \xi\>(=3\mu-\gamma)$ is constant.
Since $\mu$ is constant,  $\gamma$ is also constant.
Hence, the third equation in (\ref{2-hopf}) can be reduced to
\be
(\gamma-\mu)(2\gamma^2-3\gamma\mu-c)+\beta^2(2\gamma+\mu)=0.\label{loh}
\ee  

If $2\gamma+\mu=0$, then $\gamma=\mu=0$ or $\mu=-2\gamma=\pm\sqrt{c/2}\ \ (c>0)$. %$8\gamma^2=c>0$. 
In the former case, it follows from  Remark \ref{rem2} 
that $M$ is a minimal ruled real hypersurface.
In the latter case, 
%$\beta$ is not constant from Remark \ref{rem1}.
by the second equation in (\ref{2-hopf}) with $\alpha=3\mu-\gamma$,
we obtain $\beta(s)=\sqrt{27c/8}\tan(\sqrt{27c/8}s+d)$ for some constant $d$. Therefore, $M$ is a
hypersurface described in Case (ii) of Theorem \ref{thm4}.
If $2\gamma+\mu\ne 0$, then $\beta$ must be constant, and therefore, it follows from Remark \ref{rem1}
that $M$ is one of the equidistant hypersurfaces to Lohnherr hypersurface  in $\mathbb{C}H^2(4c)$.\qed

 %Let $M$ be a real hypersurface with non-zero constant mean curvature
   %in  $\mathbb{C}P^2(4c)$ which 
 %is non-Hopf at every point, and admits a unit vector field $U$ $(\perp\xi)$  satisfying the equality in 
%$(\ref{ricci-U})$ identically. By the same argument as in the proof of Corollary \ref{cor4}, we see that
 %$\gamma$ and $\mu$ are constant, and  (\ref{loh}) holds.
%Since $c>0$, it follows from Remark \ref{rem1}  that $\beta$ is not constant.
%Hence, by (\ref{loh}) we obtain   
%$\mu=-2\gamma=\pm\sqrt{c/2}$. 
%Substituting them into the second equation in (\ref{2-hopf}),
%we obtain $\beta(s)=\sqrt{15c/8}\tan[\sqrt{15c/8}(s+8d)]$ for some constant $d$.

 \end{document}